\pgfplotsset{compat=newest}
\newcommand{\MyScale}{1.5} 
\DeclareMathOperator{\cs}{cs}
\begin{document}

\newtheorem{theorem}{Theorem}
\newtheorem*{theorem*}{Theorem}
\newtheorem{conjecture}[theorem]{Conjecture}
\newtheorem*{conjecture*}{Conjecture}
\newtheorem{proposition}[theorem]{Proposition}
\newtheorem*{proposition*}{Proposition}
\newtheorem{question}{Question}
\newtheorem{lemma}[theorem]{Lemma}
\newtheorem*{lemma*}{Lemma}
\newtheorem{cor}[theorem]{Corollary}
\newtheorem*{obs*}{Observation}
\newtheorem{obs}{Observation}
\newtheorem{model}{Model}
\newtheorem{condition}{Condition}
\newtheorem{definition}{Definition}
\newtheorem*{definition*}{Definition}
\newtheorem{proc}[theorem]{Procedure}
\newcommand{\comments}[1]{} 
\def\Z{\mathbb Z}
\def\Za{\mathbb Z^\ast}
\def\Fq{{\mathbb F}_q}
\def\R{\mathbb R}
\def\N{\mathbb N}
\def\C{\mathbb C}
\def\k{\kappa}
\def\grad{\nabla}
\def\M{\mathcal{M}}
\def\S{\mathcal{S}}
\def\pt{\partial}
\def\pd{\partial}

\newcommand{\todo}[1]{\textbf{\textcolor{red}{[To Do: #1]}}}
\newcommand{\note}[1]{\textbf{\textcolor{blue}{#1}} \\ \\}

\title[Negative curvature constricts fundamental gaps]{Negative curvature constricts the fundamental gap of convex domains}
 \author[Iowa State University]{Gabriel Khan} 
 \author[]{Xuan Hien Nguyen}

\email{gkhan@iastate.edu}
\email{xhnguyen@iastate.edu}

\date{\today}

\maketitle 


\begin{abstract}
We consider the Laplace-Beltrami operator with Dirichlet boundary conditions on convex domains in a Riemannian manifold $(M^n,g)$, and prove that the product of the fundamental gap with the square of the diameter can be arbitrarily small whenever $M^n$ has even a single tangent plane of negative sectional curvature. In particular, the fundamental gap conjecture strongly fails for small deformations of Euclidean space which introduce any negative curvature. We also show that when the curvature is negatively pinched, it is possible to construct such domains of any diameter up to the diameter of the manifold.
The proof is adapted from the argument of Bourni et. al. \cite{bourni2022vanishing}, which established the analogous result for convex domains in hyperbolic space, but requires several new ingredients.
\end{abstract}

\section{Introduction}

We study the Laplace-Beltrami operator with Dirichlet boundary conditions on a geodesically convex domain $\Omega$ within a Riemannian manifold $M$. For any such domain with non-empty boundary, the operator has a discrete spectrum
  \[
  0 < \lambda_1(\Omega) < \lambda_2(\Omega) \leq \lambda_3(\Omega) \leq \ldots, 
  \]
with an accumulation point at infinity. Many geometric properties can be gleaned from the spectrum \cite{kac1966can} 
and a large body of work is dedicated to studying eigenvalues in Euclidean space and on Riemannian manifolds.

The \emph{fundamental gap} or \emph{spectral gap} is the difference $\lambda_2 (\Omega)- \lambda_1(\Omega)$. The quantity determines the rate at which positive solutions of the heat equation with Dirichlet conditions converge to the first eigenspace. In quantum mechanics, it  characterizes the difference in energy between the stable state, corresponding to the first eigenfunction, and the first excited state, corresponding to the second eigenfunction. Due to its relevance in physics and mathematics, this gap has been studied in depth, and one of the driving conjectures in this area was the \emph{fundamental gap conjecture} (see \cite{ashbaugh1989optimal,van1983condensation,yau1986nonlinear} and the survey article \cite{ashbaughfundamental}).

\begin{conjecture*}[Fundamental gap conjecture]
Let $\Omega \subset \mathbb R^n$ be a bounded convex domain of diameter $D$ and $V:\Omega \to \mathbb{R}$ a convex potential. Then the eigenvalues of the Schr\"odinger operator $-\Delta + V$ satisfy
  \begin{equation}
  \label{fundamental gap estimate}
  \lambda_{2}(\Omega)-\lambda_{1} (\Omega)\geq \frac{3 \pi^{2}}{D^{2}}. 
  \end{equation}
\end{conjecture*}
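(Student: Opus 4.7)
The plan is to adapt the \emph{modulus of concavity} approach pioneered by Andrews and Clutterbuck, reducing the multidimensional estimate to a sharp one-dimensional eigenvalue problem via a two-point maximum principle on $\Omega \times \Omega$. The conceptual point is that although $\lambda_2-\lambda_1$ is a global spectral quantity, once one exploits log-concavity of the first eigenfunction the gap is controlled by the endpoints of any diameter-realizing segment, so a one-dimensional comparison should be sharp.

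First, I would establish a quantitative log-concavity estimate for the positive first eigenfunction $u_1 > 0$. Brascamp--Lieb already yields that $\varphi := \log u_1$ is concave, but the sharp constant requires a finer statement: the Hessian of $\varphi$ should be dominated in every direction by the corresponding quantity for the one-dimensional model eigenfunction $\tilde u$ on the interval $[-D/2, D/2]$ with a suitably chosen symmetric convex potential $\tilde V$. Concretely, I would aim to prove
\begin{equation}
\Bigl\langle \nabla\varphi(y)-\nabla\varphi(x),\ \tfrac{y-x}{|y-x|}\Bigr\rangle \ \leq\ 2\,\tilde\varphi'\!\bigl(|y-x|/2\bigr),
\end{equation}
by propagating this two-point inequality along the parabolic flow $\partial_t u=\Delta u-Vu$ and applying the maximum principle on $\Omega\times\Omega$; the convexity of $\Omega$ is what prevents boundary contributions from appearing.

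Second, I would exploit that the ratio $w := u_2/u_1$ satisfies the drift eigenvalue equation
\begin{equation}
-\Delta w - 2\,\nabla\varphi\cdot\nabla w \ =\ (\lambda_2-\lambda_1)\,w,
\end{equation}
so the gap is the first nonzero eigenvalue of a symmetric diffusion with invariant measure $u_1^{2}\,dx$. I would then propagate the corresponding modulus of continuity for $w$, namely $w(y)-w(x)\leq 2\,\tilde w(|y-x|/2)$ with $\tilde w$ the antisymmetric second eigenfunction of the one-dimensional model. The sharp log-concavity from the first step is exactly what is needed to rule out interior maxima of the auxiliary function $Z(x,y):=w(y)-w(x)-2\,\tilde w(|y-x|/2)$ when one computes $(\Delta_x+\Delta_y)Z$ and combines it with the drift terms.

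The hard part will be the preservation of the two-point inequality under the flow: one must select the right one-dimensional comparison problem and verify that the cross terms generated by the drift $2\nabla\varphi$ are absorbed by the modulus-of-concavity inequality from step one, with nothing to spare. Once this coupling is in place, the conclusion follows from the classical one-dimensional fact (Lavine's theorem) that the infimal gap of $-\partial_x^2+\tilde V$ on an interval of length $D$ with Dirichlet conditions, taken over all convex $\tilde V$, equals $3\pi^2/D^2$ and is already attained by the constant potential, whose eigenvalues are $(k\pi/D)^2$.
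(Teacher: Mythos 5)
The paper does not prove the fundamental gap conjecture; it records it as established by Andrews and Clutterbuck and cites \cite{andrews2011proof}. Your sketch reproduces the Andrews--Clutterbuck strategy --- parabolic preservation of a sharp modulus of log-concavity of $u_1$ via a two-point maximum principle on $\Omega\times\Omega$, the drift reformulation for $w = u_2/u_1$, and reduction to Lavine's one-dimensional result for convex potentials --- so you are following the cited proof rather than proposing a genuinely different route.
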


The quantity on the right-hand side of \eqref{fundamental gap estimate} is the spectral gap for an interval when $V=0$. In dimension 2, by taking narrower rectangles (and similarly in higher dimensions), it is possible to get arbitrarily close to the right-hand side of the inequality, so the lower bound is sharp. Various papers established estimates for the gap \cite{singer1985estimate,yu1986lower}, but were unable to obtain the sharp conjectural bounds. Finally, in 2011 this conjecture was proved by Andrews and Clutterbuck using a novel two-point maximum principle \cite{andrews2011proof}.

In the more general setting of Riemannian manifolds, there are a number of papers studying the fundamental gap on round spheres (see, e.g., \cite{lee1987estimate,wang2000estimation}). In recent work, Seto, Wang and Wei adapted the Andrews-Clutterbuck approach to show that geodesically convex domains in the round sphere $\mathbb{S}^n$ satisfy $\lambda_2-\lambda> \frac{3\pi^2}{D^2}$ \cite{seto2019sharp}. Fewer papers cover the fundamental gap on manifolds with non-constant curvature. In \cite{Odenetal}, Oden, Sung, and Wang prove a lower bound for the fundamental gap on a compact Riemannian manifold with nonempty boundary satisfying a rolling $R$-ball condition (see also, \cite{Olive-Rose-WW}) Furthermore, Tuerkoen, Wei and the present authors derive a fundamental gap estimate for surfaces whose curvature satisfies a strong positivity condition \cite{khan2022fundamental}.

Recently, the second named author and several collaborators showed that the fundamental gap conjecture does \emph{not} hold in hyperbolic space \cite{BCNSWW} \cite{bourni2022vanishing}.  Even more strikingly, they showed that it is not possible to bound the product of the gap and the square of the diameter at all. Even for horoconvex domains, this quantity goes to zero as the diameter gets large \cite{nguyen2021fundamental}. The main purpose of this paper is to extend this result to Riemannian manifolds where the sectional curvature is negative or has mixed signs.

We first prove the result for two dimensional manifolds whose curvature is negatively pinched. The proof in this case is simpler because it is possible to use comparison geometry, but it contains most of the essential ideas for the general case. The proof for $n$-dimensional manifolds with pinched negative curvature is discussed with the case where the sectional curvature has mixed signs. 


\begin{theorem}
\label{thm:negative curvature}
Let $(M^n, g)$ be a Riemannian manifold whose sectional curvature is negatively pinched and  suppose there exists a minimizing geodesic of length $D$. Then, for all $\epsilon$, there is a domain $\Omega$ of diameter $D$ which is geodesically convex and such that 
    \[
    \lambda_2(\Omega) - \lambda_1(\Omega) \leq \frac{\epsilon}{D^2}.
    \]
\end{theorem}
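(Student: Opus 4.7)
\emph{Proof plan.} The plan is to adapt the construction of Bourni et al.~\cite{bourni2022vanishing}, who established the analogous result in hyperbolic space, to the pinched negative curvature setting. Write the pinching as $-b^2 \leq K \leq -a^2 < 0$. For each $\eta > 0$ the goal is to produce a geodesically convex domain $\Omega_\eta$ of diameter exactly $D$ satisfying $\lambda_2(\Omega_\eta) - \lambda_1(\Omega_\eta) \leq \eta/D^2$. The two-dimensional case already contains the essential ideas, and I focus on it here.

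First, I would introduce Fermi coordinates $(s, r)$ along $\gamma$, with $s \in [0, D]$ arclength and $r$ the signed normal distance. Since $\gamma$ is minimizing and $K < 0$, no focal points arise along $\gamma$, so these coordinates parametrize a tubular neighborhood of definite radius. The Fermi metric has the form $dr^2 + \phi(s,r)^2\, ds^2$, and Rauch comparison with the pinching constants yields $\cosh(ar) \leq \phi(s,r) \leq \cosh(br)$. I would then construct a thin geodesically convex domain $\Omega_\delta$ concentrated within distance $\delta$ of $\gamma$, whose shape mirrors the one used by Bourni et al.\ in the hyperbolic model. Convexity of the long sides follows from convexity of the distance to a geodesic in negative curvature, equivalently from the positive geodesic curvature of the equidistant curves $r = \pm \delta$; corners are handled by ensuring that interior angles do not exceed $\pi$. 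The endpoint closures and any tapering are tuned so that $\Omega_\delta$ has diameter exactly $D$.

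To estimate the gap I would invoke the min-max principle. Exhibit a two-dimensional subspace $V \subset H^1_0(\Omega_\delta)$ spanned by separable trial functions $u_k(s,r) = f(r)\, g_k(s)$, $k = 1, 2$, where $f$ is the first transverse Dirichlet mode on $(-\delta, \delta)$ and the $g_k$ are the first two eigenfunctions of the effective longitudinal Sturm--Liouville problem obtained by averaging the Laplacian against $f^2$ across the cross-section. Using the Rauch bounds on $\phi$ and the particular shape of $\Omega_\delta$, show that the first two effective longitudinal eigenvalues differ by at most $\eta/D^2$; the variational characterizations of $\lambda_1$ and $\lambda_2$ then yield $\lambda_2(\Omega_\delta) - \lambda_1(\Omega_\delta) \leq \eta/D^2$.

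The main obstacle is this last step. In~\cite{bourni2022vanishing} the symmetries of hyperbolic space permit exact separation of variables in the model domain, so the effective one-dimensional problem arises cleanly. In the present setting $\phi$ depends nontrivially on $s$, the separation is only approximate, and the resulting error terms must be controlled uniformly in $s$ using the pinching. A secondary difficulty is shaping the endpoints so that the diameter is exactly $D$ while preserving geodesic convexity, which requires a delicate interplay between the profile of $\Omega_\delta$ and the ambient Jacobi field estimates.
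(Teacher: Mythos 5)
Your proposal starts from the right geometric setup (Fermi coordinates along a minimizing geodesic, Rauch comparison from the pinching, a flaring convex tube modeled on the Bourni et al.\ domain), but the mechanism you propose for bounding the gap is not the paper's, and the step you yourself flag as the ``main obstacle'' is precisely where the proof would break down as written.

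Specifically, you plan to use a two-dimensional space of separable trial functions $u_k(s,r) = f(r) g_k(s)$ together with an effective longitudinal Sturm--Liouville problem obtained by averaging against $f^2$. Two things go wrong. First, the min-max principle with a two-dimensional test space bounds $\lambda_2$ from above, but to control the gap you also need a matching \emph{lower} bound on $\lambda_1$; getting that from the effective one-dimensional problem requires a genuine thin-domain reduction theorem, and for a tube whose width varies exponentially along its length the approximation error is not obviously smaller than the exponentially small gap you are trying to resolve. You acknowledge that separation is ``only approximate'' and the errors ``must be controlled,'' but this is exactly the missing idea, not a routine check. Second, even if the longitudinal problem were clean, your outline never produces a trial function orthogonal to the true first eigenfunction $h_1$, which is what one actually needs to run the Rayleigh-quotient comparison; in Bourni et al.\ this is free by symmetry, but in non-homogeneous pinched curvature it is not.

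The paper deliberately avoids separation of variables. Instead it studies the cross-sectional integral $V(x) = \int g^{xx} h_1^2\, dy$, proves a convexity estimate $\partial_{xx}^2 V \geq (C/r^2)\, V$ in the neck region, and integrates this to obtain exponential doubling of $V$, hence exponential smallness of $h_1$ through the neck. A gradient estimate of Arnaudon et al.\ upgrades the $L^2$ smallness to a pointwise $L^\infty$ bound. The trial function is then $\psi h_1$, where $\psi$ flips sign over the neck; its Rayleigh quotient is exponentially close to $\lambda_1$ because $h_1$ is tiny where $\psi$ varies. Finally, orthogonality of $\psi h_1$ to $h_1$ is obtained not by symmetry but by a continuity argument over a one-parameter family of sliding domains $\Omega_{r,L,t}$, using the fact that for $t$ small the mass of $h_1$ sits on one side of the neck and for $t$ large on the other. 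None of these ingredients --- the $V(x)$ doubling estimate, the gradient bound, or the continuity argument --- appear in your outline, and without a replacement for them the argument does not close.
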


Because we do not assume that the metric is homogeneous, the techniques of \cite{bourni2022vanishing} do not apply immediately. Instead, we adapt the general strategy with estimates that do not rely on constant curvature.

Furthermore, we are able to prove a stronger result in which even a single tangent plane of negative curvature is enough to build domains whose fundamental gap is arbitrarily small.

\begin{theorem}
\label{thm:main-ndimensions}
Let $(M^n, g)$ be a smooth Riemannian manifold. Suppose that there is a point $p$ and a tangent plane $V$ at $p$ satisfying $\kappa(V)<0$, where $\kappa$ is the sectional curvature. Then, for all $\epsilon$, there is a domain $\Omega$ which is geodesically convex and such that 
    \[
    \lambda_2(\Omega) - \lambda_1(\Omega) \leq \frac{\epsilon}{D^2},
    \]
    where $D$ is the diameter of $\Omega$.
\end{theorem}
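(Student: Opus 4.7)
The plan is to reduce Theorem \ref{thm:main-ndimensions} to Theorem \ref{thm:negative curvature} by a localization-and-thickening argument that converts the pointwise hypothesis $\kappa(V)<0$ into a usable pinched-curvature condition on a 2-dimensional geodesic disk through $p$, to which the two-dimensional case of Theorem \ref{thm:negative curvature} applies.

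The first step is to invoke continuity of the sectional curvature on the Grassmannian bundle $G_2(M)$. Since $\kappa(V)=-a^2<0$, there exist $r_0>0$ and $\delta>0$ such that for every $q\in B_{r_0}(p)$ and every 2-plane $W\subset T_q M$ within angular distance $\delta$ of the parallel transport $P_{p\to q}V$ one has $\kappa(W)\in[-2a^2,-a^2/2]$. Consider now the 2-dimensional geodesic disk $\Sigma=\exp_p(V\cap B_{r_0}(0))$. At $p$ the submanifold $\Sigma$ is totally geodesic and its intrinsic Gauss curvature equals $\kappa(V)=-a^2$; the second fundamental form of $\Sigma$ in $M$ vanishes at $p$ and is $O(d(p,\cdot))$ nearby, so by the Gauss equation the intrinsic Gauss curvature of $\Sigma$ is negatively pinched on $\Sigma\cap B_{r_1}(p)$ for some $r_1\le r_0$. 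The two-dimensional case of Theorem \ref{thm:negative curvature} then applies to $\Sigma$: for any $\epsilon_0>0$ and $D_0\le r_1/2$ it produces a $\Sigma$-geodesically convex $\Omega_0\subset\Sigma$ of diameter $D_0$ with $\lambda_2(\Omega_0)-\lambda_1(\Omega_0)\le\epsilon_0/D_0^2$.

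To upgrade $\Omega_0$ to a domain in $M^n$, I would take the tube $\Omega=\{q\in M:d_M(q,\Omega_0)<w\}$ for small $w>0$. Provided $w$ is chosen small relative to the intrinsic convexity of $\Omega_0\subset\Sigma$, the second fundamental form of $\Sigma\subset M$, and the ambient sectional curvatures on $B_{r_1}(p)$, the tube should be geodesically convex in $M$, its diameter is $D_0+O(w)$, and a Fubini-type min-max argument using test functions of the form $\phi_i^{\Sigma}(x)\psi(y)$ with $\psi$ the transverse ground state yields
\[
\lambda_2(\Omega)-\lambda_1(\Omega)\le \lambda_2(\Omega_0)-\lambda_1(\Omega_0)+O(w^2).
\]
Choosing $\epsilon_0=\epsilon/2$ and $w$ sufficiently small gives the desired bound.

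The main obstacle is verifying the $M$-convexity of $\Omega$ and establishing an eigenvalue comparison tight enough to preserve the gap bound. Convexity is delicate because $\Sigma$ is not totally geodesic away from $p$ and the ambient sectional curvatures transverse to $\Sigma$ are unconstrained by our hypothesis, so the boundary of the tube must be shown to have second fundamental form of the correct sign by balancing $w$ against these geometric quantities. The eigenvalue comparison is equally delicate: the error terms must be strictly smaller than the gap $\epsilon_0/D_0^2$ itself, which forces $w$ to be chosen in terms of $\epsilon_0$ and $D_0$ rather than independently. Fermi coordinates around $\Sigma$ together with a careful variational computation against the ambient Laplacian should suffice to close this loop, but this is the step where the argument really has to use something beyond the 2-dimensional reduction.
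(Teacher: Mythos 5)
Your proposal takes a genuinely different route from the paper. The paper does not reduce to the two-dimensional case and thicken; it works directly in $M^n$: it builds the convex domain as the convex hull of eight explicitly placed points (automatically convex), makes the domain narrower in the negative-curvature direction than in the other transverse directions by a tuned ratio $\rho$ satisfying \eqref{Assumption on rho}, controls the top/bottom of the domain via Jacobi field barriers (Proposition \ref{prop:barriers-for-JF}), re-derives the cross-slice estimate for $V(x)$ in $n$ dimensions, and replaces the sliding-$t$ continuity argument by a new one parametrized by $\alpha$, since in mixed curvature sliding the domain can destroy the neck. Your approach would, if it worked, be more modular and more in the spirit of a reduction theorem.

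However, there are two places where I think your argument as written does not close, and both are precisely where the paper invests most of its effort.

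First, the eigenvalue comparison. You assert $\lambda_2(\Omega)-\lambda_1(\Omega)\le \lambda_2(\Omega_0)-\lambda_1(\Omega_0)+O(w^2)$ via a ``Fubini-type min-max'' with separated test functions. But a test-function argument only gives an \emph{upper} bound on eigenvalues; bounding the \emph{gap} additionally requires a matching \emph{lower} bound on $\lambda_1(\Omega)$, which separated test functions do not provide, and under a non-product metric perturbation $\lambda_1$ can drop. More seriously, the scale is wrong: the gap of $\Omega_0$ produced by Theorem \ref{thm:negative curvature} is of order $\exp(-c/r)$ with $\lambda_1(\Omega_0)\sim r^{-2}$, so any perturbative error you incur that is a fraction of $\lambda_1$ times the metric deviation (e.g.\ $\lambda_1 \cdot O(w^2)$, or the defect in orthogonality of your test function to the true $h_1(\Omega)$ rather than to $h_1(\Omega_0)\otimes\psi$) must be driven below $\exp(-c/r)$. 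That forces $w\ll r\exp(-c/(2r))$, and even then the orthogonality of $\psi_t\, h_1^{\Omega_0}\otimes\psi$ to $h_1(\Omega)$ is not inherited from the 2D continuity argument — you would need to rerun the continuity argument on the tube domain itself. At that point you are essentially redoing the paper's $n$-dimensional argument, not invoking the 2D theorem as a black box.

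Second, convexity of the tube. The domain $\Omega_0$ in the 2D construction is bounded by four geodesic arcs and has corners; the metric tube $\{q: d_M(q,\Omega_0)<w\}$ has edges along those corners and its side boundary is an equidistant surface from $\Sigma$. Because $\Sigma$ is not totally geodesic away from $p$ and the transverse sectional curvatures are completely unconstrained by the hypothesis $\kappa(V)<0$, the sign of the second fundamental form of the equidistant boundary is not automatic and must be won by making $w$ small enough that the $O(1/w)$ tube curvature dominates; but this competes with the already exponential smallness of $w$ needed above, and at the edges of $\Omega_0$ a convexity argument for the smoothed corners is needed as well. The paper sidesteps all of this by defining $\Omega$ as a convex hull of points, so convexity is free, and then proves the flattening directly via the Jacobi field barriers. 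I would encourage you to either quantify the tube estimates at the $\exp(-c/r)$ scale or, more in line with the paper, to give up the 2D reduction and build the $n$-dimensional convex hull directly.
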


In Theorem \ref{thm:main-ndimensions}, $D$ cannot be arbitrary and is taken to be small relative to the $C^1$ norm of the curvature and the diameter of the manifold. 

 The main idea in the proofs is to use the negative curvature to create convex domains with small ``necks", which are small regions where the domain concentrates before expanding at either end (see Figure \ref{fig:Omega_r,L,t}). The neck acts as a narrow channel where the principal eigenfunction must be very small.  This phenomenon was used in the previous work on $\mathbb H^n$. 
 In the homogeneous case, a separation of variables reduces the problem of computing the fundamental gap to studying an ODE. Here, because the curvature is not constant, this separation of variables is no longer possible, so we must use several different ideas. First, we apply comparison geometry and integral estimates rather than pointwise estimates to capture the behavior of the first eigenfunction $h_1$. Secondly, the proof of \cite{bourni2022vanishing} benefits from the symmetry of the region and of $h_1$. Without it, we have to build a one-parameter family of domains to find one where our ansatz for the second eigenfunction is orthogonal to $h_1$. 

\section{An overview of the paper}

Because the proof involves a string of estimates whose purpose is not immediately clear, we begin by providing a high-level overview of the argument to give some intuition for each step.

\subsection{Negatively pinched curvature}
\label{ssec:negatively-pinched}

All domains we will construct are narrow tubes around a given geodesic, so we start the paper by setting a Fermi coordinate system and derive estimates on the metric in these coordinates in Section \ref{sec:fermi}. 

Sections \ref{Start of 2D proof}, \ref{Eigenfunction analysis 2D}, and \ref{Cutoff analysis 2D} are dedicated to the proof of Theorem \ref{thm:negative curvature} in two dimensions. In Section \ref{Start of 2D proof}, we define a one-parameter family of domains, which look like very short and wide rectangles. Due to the negative curvature, each of these domains flares out at least on one side. We use the widening to obtain a bound on the first eigenvalue, which will be small compared to the associated eigenvalue for a similar rectangle in Euclidean space.

In Section \ref{Eigenfunction analysis 2D}, we turn our efforts to bounding the principal eigenfunction $h_1$ through the narrowest part of the domain, which we call the ``neck". The crux of the proof is the analysis of the vertical integral 
 \[
 V(x_0) = \int_{\Omega\cap \{x=x_0\}} g^{xx} h_1^2 dy
 \]
seen as a function of $x_0$ and where $g^{xx}$ is a component of the inverse of the metric. Integration over slices reduces the problem to one dimension. Here, we derive exponential growth for $V$ rather than for $h_1$ through the neck. 

In order to understand the gist of the argument in Section \ref{ssec:Vgrowth}, it is helpful (even if wrong) to pretend that the metric is $\delta_{ij}$ at first. We use the notation $\sim$ for denoting things that behave mostly the same way. For example, $ V(x_0) \sim \int_{\Omega\cap \{x=x_0\}} h_1^2 dy$. Taking two derivatives of $V$, one would get
\[
\frac{\pd^2}{\pd x^2} V(x) \sim \int _{\Omega\cap \{x=x_0\}} \pd^2_{xx} (h_{1}^2) dy. 
\]
The boundary terms vanish because of the boundary condition on $h_{1}$. Expanding $\pd^2_{xx}(h_{1}^2) = 2 (\partial_x h_{1})^2 + 2 h_{1} \pd^2_{xx}h_{1}$ and replacing $\pd^2_{xx} h_{1}$ by $\Delta h_{1} - \pd^2_{yy} h_{1}$, we obtain, after using integration by parts for the second line,
  \begin{flalign*}
\frac{1}{2}\frac{\pd^2}{\pd x^2} V(x_0) &\sim \int _{\Omega\cap \{x=x_0\}} h_{1} \Delta h_{1} - h_{1} \pd^2_{yy} h_{1} + (\pd_x h_{1})^2 dy\\
  & \sim \int _{\Omega\cap \{x=x_0\}} - \lambda_1 h_{1}^2  + (\pd_{y} h_{1})^2 + (\pd_x h_{1})^2 dy\\
  & \geq \int _{\Omega\cap \{x=x_0\}}   (\pd_{y} h_{1})^2 - \lambda_1 h_{1}^2  dy\\
  & \geq C V(x_0).
\end{flalign*}
The integral $\int (\pd_y h_1)^2 dy$ is greater than the eigenvalue of the cross-slice  times $V$. The fact that the constant $C$ is positive is a tug between the first eigenvalue of the cross-slice $\Omega \cap \{x=x_0\}$ and the eigenvalue $\lambda_1$ of the domain. Because the necks are small, the former is very large and because the height of the domain expands quickly, the latter is smaller. From the estimate on $\lambda_1$ in Section \ref{Start of 2D proof}, we have that 
$C$ is of order $\frac{1}{r^2}$ whenever $x_0$ is less than $\sim L/9$. Thus the quantity $V(x_0)$ doubles rapidly through the neck.

After normalizing $h_1$ in terms of its $L^\infty$ norm, we recall the gradient estimate from \cite{arnaudon2020gradient} to show that the supremum of $h_1$ also doubles rapidly through the neck so the eigenfunction must be very small in both supremum norm and $L^2$ norm in the neck.
   
    In Section \ref{Cutoff analysis 2D}, we use these observations to find an approximation for the second eigenfunction whose Rayleigh quotient is very close to that of the principal eigenfunction. In \cite{bourni2022vanishing}, the authors built an ansatz for the second eigenfunction by switching the sign of $h_1$ in the neck with the help of a function $\psi$.  Thanks to the symmetry, any function that was odd with respect to the first variable was orthogonal to $h_1$ and therefore provided an upper bound for the second eigenvalue through the Rayleigh quotient. For this article, we still construct the ansatz $\psi h_1$ the same way but the resulting functions are generally not orthogonal to $h_1$. It is worth noting that our particular choice of function $\psi$ is different from the one used in hyperbolic space, which allows us to avoid establishing a more refined gradient estimate.
    
    Finally, we make use of the one-parameter family of convex domains to find an value of the parameter where $\psi h_1$ and $h_1$ are orthogonal. For the sake of exposition, we will use a simpler argument which requires a minimizing geodesic of length $2D$. With a more technical argument, it is possible to instead only assume there is a minimizing geodesic of length $D+\epsilon$. This argument is discussed in Subsection \ref{A new continuity argument}.

\subsection{Mixed curvature and higher dimensions}

In Section \ref{sec:Three dimensional case}, we extend the proof to manifolds of higher dimensions whose sectional curvature is either have negatively pinched or has a mixed sign. There are two main difficulties compared to the case of surfaces with negative curvature. First, in the two dimensional case, the boundary of the domains were given by smooth geodesics. In higher dimensions, the boundaries of convex hulls need not be smooth, a consequence of the fact that most Riemannian metrics do not admit totally geodesic hyper-surfaces of codimension one. Second, in the mixed curvature case we do not assume that the sectional curvature is negative definite, so we will need to handle tangent planes with positive curvature. It also precludes the application of standard comparison theorems, so instead of proving a neck phenomenon, we derive a \emph{flattening} lemma instead. Furthermore, to account for the positive curvature, we will make the domain narrower in the directions of negative curvature compared to those of positive curvature\footnote{Here, the curvature of a direction $V$ is the sectional curvature of the tangent plane spanned by $V$ and $\dot \gamma$.},  and take $L$ to be small, which we did not have to do in the previous case.

\begin{figure}[H]
    \centering
\includegraphics[width=.9\linewidth]{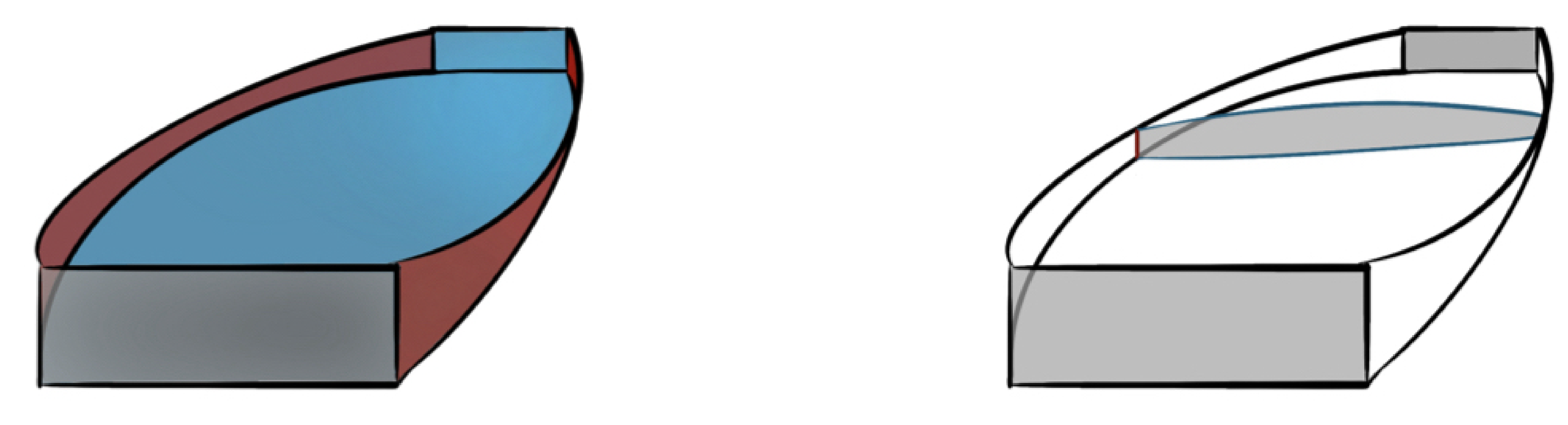}
\caption{A domain when the curvature has a mixed sign and the cross-slices used to define $V(x)$. \protect \footnotemark}
\label{fig:A model domain in 3D}
\end{figure}

\footnotetext{Throughout the paper, we will use the color red to indicate regions/directions of negative curvature and blue to indicate regions/directions of positive curvature.}

Once this is done, the proof is very similar to the case of negatively pinched curvature, which works by showing the principal eigenfunction must become very small in the flattened part. However, there is one final technical difficulty, which is that we must use a different one-parameter family of convex domains in the continuity argument.

\section{Fermi coordinates and estimates on the metric}
\label{sec:fermi}

We start by defining the coordinate system used and establishing bounds on the metric in these coordinates.

\subsection{The Fermi coordinates}
\label{ssec:coordinates}
Let $\gamma(x)$ be a geodesic segment of length $2L$ parametrized by arc length. Let $\{e_i\}_{i=1}^n$ be an orthonormal frame at $\gamma(0)$ with $e_1 = \gamma'(0)$.  We extend our frame $\{e_i\}_{i=1}^n$ to points $\gamma(x)$ by parellel transport.  In a tubular neighborhood of $\gamma(x)$, the chart $\phi: (-L, L) \times B_{r}^{n-1}$ for some small $r$ (not related to the $r$ of the rest of the article) given by  
  \[
  \phi(x,y)= \exp_{\gamma(x)} (  y^\alpha e_\alpha), \quad \alpha=2, \ldots, n
  \]
defines Fermi coordinates.

For $x$ fixed, $y \mapsto \phi(x,y)$ are normal coordinates on the $(n-1)$-dimensional manifold formed by geodesics rays perpendicular to $\gamma$ at $\gamma(x)$.

\subsection{The metric}
From the choice of coordinates, $g_{ij} (x,0) = \delta_{ij}$, where $g_{ij}$ is the metric in the Fermi coordinates chosen in Section \ref{ssec:coordinates}. In the two dimensional case, the indices go from 1 to 2, with the first coordinate being $x$ and the second one $y$.
\begin{proposition}
\label{prop:estimates-metric} Suppose that in a tubular neighborhood of $\gamma$, we have that the sectional curvature is bounded between two constants $K_1 \leq \kappa \leq K_2$.  Choosing an even smaller neighborhood of $\gamma$ then the one given in Lemma \ref{lem:metric-estimate} if necessary, we have
 \begin{equation}
    \label{eq:metric-estimates}
  |g_{ij} (x,y) - \delta_{ij}| \leq C \|y\|^2, 
 \end{equation}
where $\|y\|^2$ is the square of the distance from $(x,y)$ to $(x,0)$ and $C$ is a constant which depends on $K_1$ and $K_2$. Moreover, the Christoffel symbols and the first derivatives of the metric satisfy
 \begin{equation}
    \label{eq:gamma-estimates}
|\pd_i g_{kl} (x,y)|, |\Gamma_{ij}^k (x,y)| \leq C \|y\|,
 \end{equation}
 where $C$ is some constant which depends on $K_1$, $K_2$ and $|\nabla R|$. We also have for the second derivatives
 \begin{equation}
 \label{eq:pd2metric-estimates}
 |\pd^2_{xx}g_{kl}(x,y)| \leq C \|y\|^2
 \end{equation}
 where the constant $C$ now depends on $K_1$, $K_2$, $|\nabla R|$ and $|\nabla^2 R|$ in the chosen neighborhood. The size of this neighborhood depends on the third derivative of the curvature.
\end{proposition}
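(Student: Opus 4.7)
The overall strategy is to exploit a single preliminary fact --- namely, that every Christoffel symbol, and hence every first partial derivative of $g$, vanishes on $\gamma$ --- and then obtain each of the three estimates from a Taylor expansion in $y$, with the order of vanishing of the relevant quantity at $y=0$ dictating the power of $\|y\|$ in the bound.

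First I would prove $\pd_m g_{ij}(x,0)=0$ for all indices. The vanishing $\Gamma^k_{ij}(x,0)=0$ comes from three complementary facts: $\gamma$ is a geodesic parametrized by arc length, giving $\Gamma^k_{11}(x,0)=0$; the frame $\{e_i\}$ is parallel transported along $\gamma$, giving $\Gamma^k_{1\alpha}(x,0)=0$ for $\alpha \geq 2$; and on the slice $\{x = \text{const}\}$ the $y$-coordinates are normal coordinates, giving $\Gamma^k_{\alpha\beta}(x,0)=0$ for $\alpha,\beta \geq 2$. Combining these with the identity $\pd_m g_{ij} = \Gamma^k_{mi}\, g_{kj} + \Gamma^k_{mj}\, g_{ik}$ (which follows from $\nabla g = 0$) yields the claim. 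Two useful consequences: $g_{ij}(x,0) = \delta_{ij}$ and $\pd_\alpha g_{ij}(x,0)=0$ for $\alpha \geq 2$ are constant in $x$, so all of their $x$-derivatives of arbitrary order also vanish on $\gamma$.

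For (\ref{eq:metric-estimates}), a second-order Taylor expansion of $g_{ij}(x,\cdot)$ in $y$ reads
\[
g_{ij}(x,y) - \delta_{ij} = \tfrac{1}{2}\pd_\alpha\pd_\beta g_{ij}(x,0)\, y^\alpha y^\beta + O(\|y\|^3),
\]
and by the Jacobi-field description of the Fermi metric the second-order coefficients are tensorial in the Riemann tensor at $\gamma(x)$, hence uniformly bounded by a constant depending only on $K_1,K_2$. For (\ref{eq:gamma-estimates}), Taylor's theorem applied to $\pd_m g_{ij}$ gives
\[
\pd_m g_{ij}(x,y) = \int_0^1 \pd_\alpha\pd_m g_{ij}(x, sy)\, y^\alpha\, ds,
\]
and, after shrinking the neighborhood if needed, the integrand is uniformly bounded by a constant involving $|R|$ and $|\nabla R|$. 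Plugging into $\Gamma^k_{ij}=\tfrac{1}{2} g^{kl}(\pd_i g_{jl}+\pd_j g_{il}-\pd_l g_{ij})$ and using (\ref{eq:metric-estimates}) to control $g^{kl}$ then gives the stated bound on the Christoffel symbols.

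For (\ref{eq:pd2metric-estimates}), the key observation is that $y \mapsto \pd_{xx}^2 g_{kl}(x,y)$ vanishes to first order at $y=0$: $\pd_{xx}^2 g_{kl}(x,0)=0$ because $g_{kl}(x,0)=\delta_{kl}$ is constant in $x$, and $\pd_\alpha \pd_{xx}^2 g_{kl}(x,0) = \pd_{xx}^2(\pd_\alpha g_{kl})(x,0)=0$ by the last consequence noted above. A second-order Taylor expansion in $y$ therefore produces $|\pd_{xx}^2 g_{kl}(x,y)| \leq C\|y\|^2$, with $C$ controlled by $\sup|\pd_x^2 \pd_y^2 g|$ on the neighborhood, which in turn is controlled by $|R|, |\nabla R|, |\nabla^2 R|$ via the Jacobi equation governing the Fermi metric. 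The main technical obstacle I anticipate is the bookkeeping of which covariant derivatives of $R$ are needed to bound which mixed derivatives of $g$, and picking a single tubular neighborhood small enough that all three Taylor remainders are uniformly under control --- which is exactly the successive shrinking that the statement of the proposition permits.
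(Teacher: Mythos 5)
Your proposal is correct and takes essentially the same approach as the paper. Both arguments hinge on the same structural observation --- that the Christoffel symbols (equivalently, all first partial derivatives of $g$) vanish identically along $\gamma$, which together with $g_{ij}(x,0)=\delta_{ij}$ makes $g-\delta$, $\partial g$, and $\partial^2_{xx}g$ vanish to the claimed order in $y$ --- and then bound the Taylor remainders by the appropriate derivatives of $R$ via the Jacobi-field structure of Fermi coordinates, shrinking the tube as needed. The paper carries out the Jacobi-field derivative computations more explicitly (working with covariant derivatives $\nabla_1$, $\nabla_\alpha$, which coincide with ordinary ones along $\gamma$ since $\Gamma\equiv 0$ there, and in fact obtaining the slightly sharper intermediate bound $|\nabla_1 g_{ij}|\leq C|y|^2$), whereas you argue at the level of orders of vanishing plus Taylor's theorem with integral remainder; but these are presentational differences, not different routes.
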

An immediate consequence of \eqref{eq:metric-estimates} are the following estimates on the inverse of the metric and its determinant: 
\begin{gather}
    \label{eq:g-dg} |g^{ij}(x,y) - \delta^{ij} | \leq C \|y \|^2, \quad |\pd_i g^{kl}(x,y)| \leq C \|y\|, \\
     \label{eq:detg}|\pd^2_{xx} g^{kl}(x,y)| \leq C \|y\|^2, \quad  \det(g) \leq 1 + C \|y \|^2.
\end{gather}

The Proposition \ref{prop:estimates-metric} is a corollary of Lemma \ref{lem:metric-estimate} in which we derive Taylor expansions of the metric in Fermi coordinates. The statement and proof of the Lemma \ref{lem:metric-estimate} and proof of the Proposition \ref{prop:estimates-metric} are given in Appendix \ref{ap:estimates-metric}.

 Throughout the rest of the paper, the constants will depend on the curvature tensor $R$, its derivatives $\nabla R$, $\nabla^2 R$, the dimension, and the constant $L$. We will suppress the notation $C(R,\nabla R, \nabla^2 R, n,L)$ and simply express such quantities as $C$. The constants are allowed to change from one line to the next, and will all be denoted by $C$.       
\section{Proof of Theorem \ref{thm:negative curvature}}

\label{Start of 2D proof}
In this section, we will start the proof of Theorem \ref{thm:negative curvature}. For the sake of exposition, we specialize to two dimensions (the higher dimensional case will be discussed in Subsection \ref{Higher dimensional case}). Our first task is to define the relevant domains and establish some important facts about their geometry.
 \subsection{The domains}
The constant $r$ is chosen small enough in order for the domains to be in the neighborhood of the geodesic $\gamma$ of length $2L$ given by Proposition \ref{prop:estimates-metric}. The constant $L$ is fixed, although in higher dimensional case, $L$ may need to be small in order to control the contribution of $\nabla R$. The one-parameter family of domains is just obtained by sliding cuts that are length $L$ apart (see Figure \ref{fig:Omega_r,L,t}). 

\begin{definition}[The domains $\Omega_{r,L}$ and $\Omega_{r,L,t}$]
\label{def:domains} Let $\sigma_x$ be the geodesic through $(x, 0)$ that is perpendicular to $\gamma$. Let $\gamma_y$ be the geodesic through $(0,y)$ that is perpendicular to $\sigma_0$. 
The domain $\Omega_{r,L}$ is the convex domain enclosed by $\gamma_{-r}$, $\gamma_r$, $\sigma_{-L}$ and $\sigma_L$. 
The domain $\Omega_{r, L, t}$, $t \in (0, L)$ is the convex domain enclosed by $\gamma_{-r}$, $\gamma_r$, $\sigma_{-L+t}$, and $\sigma_t$. The corners of $\Omega_{r, L, t}$ are denoted $P(t)$, $Q(t)$, $R(t)$, and $S(t)$ as in Figure \ref{fig:Omega_r,L,t}.   
\end{definition}

\begin{figure}[hbtp]
\begin{tikzpicture}[scale=\MyScale]
\node at (-100,0) {\includegraphics[width=.9\linewidth]{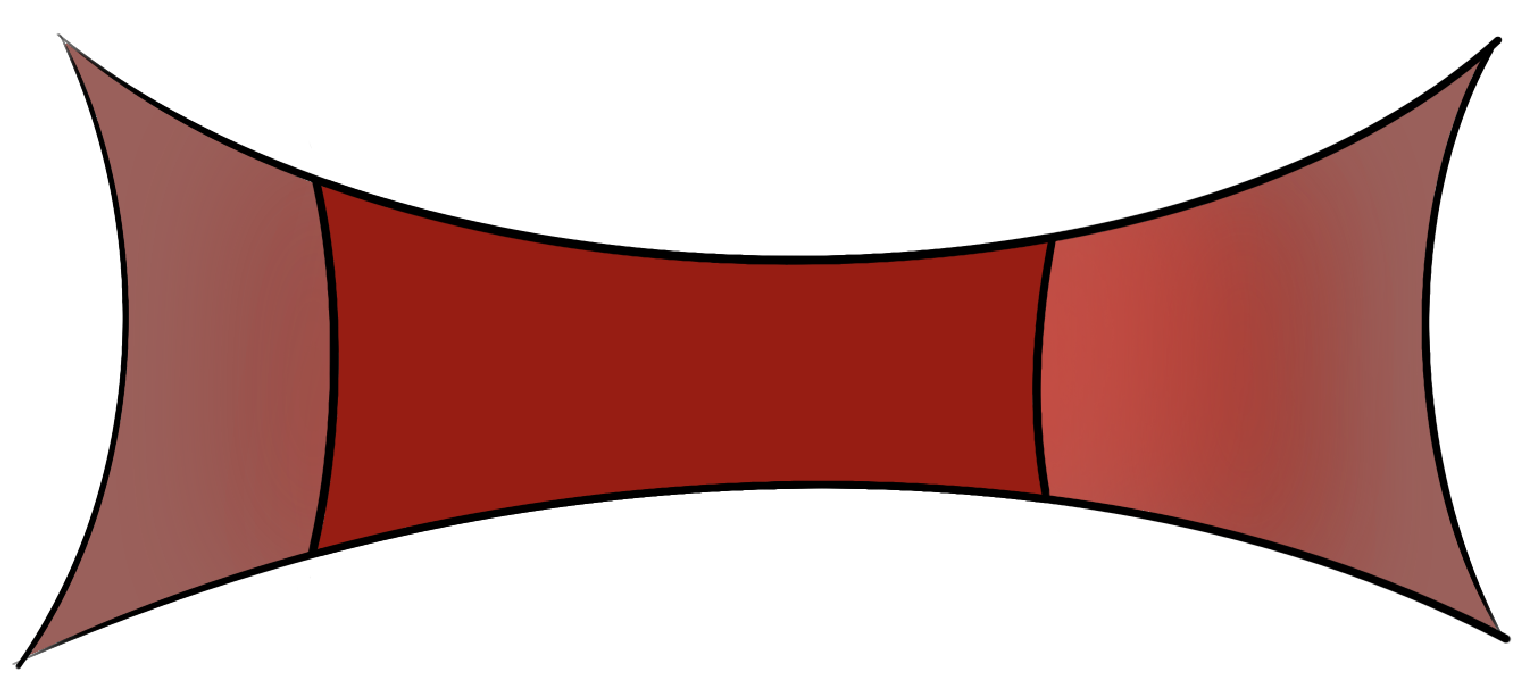}};
\node at (-98.3,-1.3) {$P(t)$};
\node at (-98.3,0.9) {$Q(t)$};
\node at (-102.4,1.2) {$R(t)$};
\node at (-102.4,-1.5) {$S(t)$};
\end{tikzpicture}
\caption{The domain $\Omega_{r, L, t}$}
\label{fig:Omega_r,L,t}
\end{figure}
\subsection{The Rayleigh quotient}
We use the Rayleigh quotient to bound the difference of the first two eigenvalues. Recall that for a domain $\Omega$ and a function $h$ in $W^{1,2}_0$, the closure in $W^{1,2}$ of smooth functions compactly supported in $\Omega$, one defines the Rayleigh $R[h]$  quotient by  
\begin{equation*}
     \mathcal R[h] = \frac{\int_{\Omega}|\nabla h|^2}{\int_{\Omega} |h|^2}. 
\end{equation*}
The first and second eigenvalues of $\Omega$ are characterized by 
\begin{equation}
    \label{eq:eigenvalues-Rayleigh}
    \lambda_1 = \mathcal R[h_1]=\inf_{h \in W^{1,2}_0} \mathcal R[h], \qquad \lambda_2 = \inf_{h \perp h_1, h \in W^{1,2}_0} \mathcal R[h], 
\end{equation}
where $h_1$ is the first eigenfunction. Each infimum can also be taken over functions in $C^{1}_0(\Omega)$. 

\subsection{Comparing distances with domains in $\mathbb H^2$}
\label{sec:comparing_distances}

In this section, we prove that the domains $\Omega_{r, L, t}$   enclose a ``large-enough" set. By renormalizing the metric, we can assume from now on that the sectional curvature is bounded between $-K^2$ and $-1$ in this coordinate system.

Let $J$ be the Jacobi field along $\gamma$ with initial conditions $J(0)=r e_2 =r \frac{d}{ds} \sigma_0'(0)$ and $J'(0)=0$. Note that since the initial conditions of the Jacobi field are perpendicular to $\gamma'(0)$, $J(x)$ is perpendicular to $\gamma'(x)$. We use an extension of Rauch's Comparison Theorem (see \cite{doCarmo} p. 234 Theorem 4.9) and find that 
\[
r \cosh(x) \leq |J(x)| \leq \frac{r}{K} \cosh(K x)
\]
The Jacobi field represents the first degree variation of geodesic spread. Thus, for any $2\delta>0$, we can have
\begin{equation}
    \label{eq:vertical-length}
    (1-2\delta) r \cosh(t) \leq \text{dist}((t, 0), P(t)) \leq  (1+2\delta) \frac{r}{K} \cosh (Kt)
\end{equation}
provided $r$ is small enough depending on $\delta$ (and $L$), which is not a problem because $r$ will go to zero.

When $t \geq L/2$ the domain $\Omega_{r, L,t}$ contains all the Fermi coordinate points 
\begin{equation}
\label{eq:contained-rect}
    \mathcal W = \left\{
(x, y) \mid x \in [L/4, L/2], y \in \left[-(1-2\delta)r \cosh(L/4) , (1-2\delta)r\cosh(L/4) \right]\right\}. 
\end{equation}
The $\mathcal W$ is for ``wectangle" or rectangle in our Fermi coordinates. 
When $t \leq L/2$, we have the same bounds for $y$ but $x\in [-L/2, -L/4]$. From now on, let us assume that $t \leq L/2$. The case $t\geq L/2$ is treated similarly.  

\subsection{An upper bound on the principal eigenvalue}

We now derive an upper bound on the principal eigenvalue of $\Omega_{r,L,t}$. Because these domains contain the wectangle $\mathcal W$ as in \eqref{eq:contained-rect}, we can control the first eigenvalue of $\Omega_{r,L,t}$ by the first eigenvalue of $\mathcal W$ using monotonicity.  To estimate the latter, we appeal to \eqref{eq:eigenvalues-Rayleigh} and insert the function
\[
f(x, y) = \sin \left( \frac{\pi (x-L/4)}{L/4} \right) \cos \left( \frac{\pi y}{2(1-2\delta)r \cosh(L/4)} \right).
\]

 We denote by $\partial_x$ and $\partial_y$ the standard derivatives with respect to $x$ and $y$, respectively, and by $\nabla_x$ and $\nabla_y$ the covariant derivatives. Using \eqref{eq:g-dg} to estimate the inverse of the metric and Young's inequality to handle the term  $\partial_x f \partial_y f$, we obtain 
\begin{equation} \label{norm gradient squared bound}
|\nabla f|^2  \leq (1+C r^2) \left [ \left( {\partial_x} f \right)^2 +\left( {\partial_y} f \right)^2 \right ].
 \end{equation}
Note that the constant $C$ in \eqref{norm gradient squared bound} depends on $L$ because  $|y|$ is of order $r \cosh{L}$. We will integrate over $\mathcal W$ but also pass to coordinates $(x,y)$ and do the integration in $\R^2$. To distinguish between the two, $\mathcal W$ is a domain on our manifold and $W$ is a rectangle in the plane with the corresponding volume forms for each of the domains. Integrating inequality \eqref{norm gradient squared bound} over $\mathcal W$, and using \eqref{eq:detg} for the second line, we have that 
\begin{align*} 
\int_{\mathcal W} |\nabla f|^2 
& 
\leq \int_{W} (1+Cr^2) \left[\left( {\partial_x} f \right)^2 +\left( {\partial_y} f \right)^2 \right ] (\det g)\, dx\,  dy,\\
& 
\leq (1+Cr^2)\int_{W}  \left[\left( {\partial_x} f \right)^2 +\left( {\partial_y} f \right)^2 \right ] \, dx\,  dy,\\
&
= (1+C r ^2) \left(\frac{16 \pi^2}{L^2} + \frac{\pi^2}{4(1-2\delta)^2r^2 \cosh(L/2)} \right) \int_{W} f^2 \, dx \, dy,\\
&
\leq  \left(\frac{\pi^2}{4(1-2\delta)^2r^2 \cosh(L/2)}+ \frac{16 \pi^2}{L^2} + \frac{\pi^2}{\cosh(L/2)} + O(r^2)\right) \int_{\mathcal W} f^2,\\
& \leq  \left(\frac{\pi^2}{ 4(1-\delta)^2r^2\cosh(L/2)}\right) \int_{\mathcal W} f^2,
\end{align*}
for $r$ small enough. Therefore 
\begin{equation} \label{principaleigenvalueestimate}
    \lambda_1 (\Omega_t) \leq \lambda_1 (\mathcal W) \leq  \frac{ \pi^2}{4(1-\delta)^2 r^2\cosh(L/2)} 
\end{equation} 
for $r$ small. 

Before moving on, let us make two comments about this estimate.
\begin{enumerate}[label=(\roman{enumi})]
    \item For fixed $L$, as $r \to 0$, the expression on the right-hand side goes to infinity. Using the fact that $r$ is very small, with slightly more work it can be shown that the eigenvalue also goes to infinity. However, we will not need this fact so do not derive it here.
    \item Because of the factor $\cosh(L/2)$ in the denominator, so long as $\delta$ is sufficiently small the bound given by equation \eqref{principaleigenvalueestimate} is smaller than what one finds for a short rectangle in Euclidean space (where the fundamental gap conjecture holds). In other words, the eigenvalue can be controlled by the height in the ends, rather than the height in the neck.
\end{enumerate}

\section{Bounding the eigenfunction through the neck}

\label{Eigenfunction analysis 2D}

We now undertake the main step of the proof, which is to show that the principal eigenfunction is very small through the neck. The strategy is to prove an integral estimate then use a gradient estimate to translate it into pointwise bounds.

\subsection{Analyzing the vertical cross-slices: an $L^2$ bound.} \label{ssec:Vgrowth}

We consider Fermi coordinates as described in Section \ref{ssec:coordinates}. Let $\ell_-(x)$ and   $\ell_+(x)$ be the length of the geodesics connecting $(x, 0)$ to $P(x)$ and $(x,0)$ to $Q(x)$, respectively. We denote $\ell (x) = \max(\ell_+(x), \ell_-(x))$. Throughout the rest of the proof, we will only consider the first eigenfunction so will drop the subscript from $h_1$ and $\lambda_1$ unless there is some possibility for confusion. Therefore, let $h_{}$ be the first eigenfunction of $\Omega_{r,L,t}$ normalized so that $\|h_{}\|_{L^\infty} = 1$. 

Our goal now is to estimate the quantity
\begin{equation} \label{Vertical L^2 norms}
V(x) =    \int_{-\ell_-(x)}^{\ell_+(x)} g^{xx} h_{}^2 dy.
\end{equation}

In \cite{bourni2022vanishing}, the first eigenfunction was shown to increase exponentially through the neck for fixed $y$. This was done through a separation of variables and studying an explicit ordinary differential for the $x$ variable. Here, we derive the same result (exponential growth through the neck), but do so by considering the behavior of $V(x)$, which is heuristically the ``squared $L^2$-norm" of vertical slices. 

\begin{lemma}
\label{lem:second-derivative-V}
Let $V(x)$ be defined as in \eqref{Vertical L^2 norms} and let $\delta>0$ be an arbitrarily small constant. For $L > -2 \ln (1-\delta)$ and for $r$ sufficiently small depending on $\delta$, there is a positive constant $C(R, \nabla R, \nabla^2 R, L)$ so that for $|x| < L/9$,
\begin{equation}
\label{eq:pd2V-estimate}
\frac{\pd^2}{\pd x^2} V(x) \geq \frac{C}{r^2} V(x). 
\end{equation}
\end{lemma}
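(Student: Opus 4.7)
The plan is to prove \eqref{eq:pd2V-estimate} by differentiating $V$ twice under the integral sign, using the eigenvalue equation $\Delta h=-\lambda h$ together with integration by parts in $y$ to reorganize the integrand, and then invoking the one-dimensional Dirichlet Poincar\'e inequality on each cross-slice. The heuristic in Section~\ref{ssec:Vgrowth} carries out this strategy for the flat metric; the main work is to verify that every metric correction allowed by Proposition~\ref{prop:estimates-metric} is of strictly lower order than $1/r^2$ and can therefore be absorbed into the final constant.

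First, I would differentiate $V$ twice. Because $h$ vanishes on the lateral geodesics $\gamma_{\pm r}$, every boundary contribution produced by moving derivatives past the varying endpoints $\pm\ell_{\pm}(x)$ carries a factor of $h^2$ or $h\,\partial_x h$ on $\gamma_{\pm r}$ and therefore vanishes. Expanding,
\[
\partial_x^2 V(x) = \int_{-\ell_-(x)}^{\ell_+(x)} \Bigl[\, 2 g^{xx}(\partial_x h)^2 + 2 g^{xx} h\,\partial_x^2 h + 4(\partial_x g^{xx})\,h\,\partial_x h + (\partial_x^2 g^{xx})\,h^2 \,\Bigr] dy,
\]
where the last two terms are $O(r)$ and $O(r^2)$ by \eqref{eq:g-dg}--\eqref{eq:detg}. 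Next I would substitute the eigenvalue equation in Fermi coordinates,
\[
g^{xx}\partial_x^2 h = -\lambda h - g^{yy}\partial_y^2 h - 2 g^{xy}\partial_x\partial_y h + g^{ij}\Gamma_{ij}^k\partial_k h,
\]
and integrate the resulting $-2 h g^{yy}\partial_y^2 h$ by parts in $y$; the boundary terms vanish, yielding $+2 g^{yy}(\partial_y h)^2$ plus a further $O(r)$ remainder from $\partial_y g^{yy}$. Discarding the non-negative $g^{xx}(\partial_x h)^2$ piece, I obtain
\[
\partial_x^2 V(x) \;\geq\; 2\int\!\bigl[g^{yy}(\partial_y h)^2 - \lambda h^2\bigr]\,dy \;-\; (\text{error}).
\]

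For the decisive step, I would apply the one-dimensional Dirichlet Poincar\'e inequality on the cross-slice. Since $h$ vanishes at both $\pm\ell_{\pm}(x)$ and $g^{yy}=1+O(r^2)$,
\[
\int g^{yy}(\partial_y h)^2\,dy \;\geq\; (1-Cr^2)\,\mu(x)\int h^2\,dy, \qquad \mu(x) = \frac{\pi^2}{\bigl(\ell_+(x)+\ell_-(x)\bigr)^2}.
\]
The Jacobi field upper bound from Section~\ref{sec:comparing_distances} gives $\ell_{\pm}(x) \leq (1+2\delta)(r/K)\cosh(Kx)$, so for $|x|<L/9$,
\[
\mu(x) \;\geq\; \frac{\pi^2 K^2}{4(1+2\delta)^2 r^2 \cosh^2(KL/9)}.
\]
Combining this with the upper bound $\lambda \leq \pi^2/[4(1-\delta)^2 r^2\cosh(L/2)]$ from \eqref{principaleigenvalueestimate}, and using that the admissible range of $L$ makes $\cosh(L/2)$ dominate $\cosh^2(KL/9)$ with a definite margin, I would conclude $\mu(x)-\lambda \geq c(L,K,\delta)/r^2$. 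Since $V(x) = \int g^{xx} h^2\,dy$ with $g^{xx}=1+O(r^2)$, the inequality $\partial_x^2 V \geq (C/r^2) V$ follows after shrinking $r$ to absorb the remaining $O(r)$ corrections.

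The main obstacle is controlling the cross-terms $-4 h g^{xy}\partial_x\partial_y h$ and $2 h g^{ij}\Gamma_{ij}^k\partial_k h$ produced by the eigenvalue substitution, since they pair $h$ with derivatives of $h$ that are not immediately positive after integration. However, $|g^{xy}|\leq Cr^2$ and $|\Gamma|\leq Cr$ by \eqref{eq:g-dg}--\eqref{eq:gamma-estimates}, so after an additional integration by parts in $y$ (to move $\partial_y$ off $h\,\partial_x h$) followed by Young's inequality, these contribute only $O(r)$ perturbations to the coefficients of $\int(\partial_x h)^2$, $\int(\partial_y h)^2$, and $\int h^2$, all of which are safely absorbed into the final constant $C$ as $r\to 0$.
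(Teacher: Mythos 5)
Your proposal follows essentially the same route as the paper's proof: differentiate $V$ twice under the integral (boundary contributions vanish because $h$ vanishes to second order on $\gamma_{\pm r}$), substitute the eigenvalue equation for $g^{xx}\partial_x^2 h$, integrate by parts in $y$ to produce $g^{yy}(\partial_y h)^2$, apply the one-dimensional Dirichlet Poincar\'e inequality on each cross-slice, and compare the resulting $\mu(x)\sim c/r^2$ with the upper bound $\lambda_1 \lesssim \pi^2/(4(1-\delta)^2 r^2\cosh(L/2))$ from \eqref{principaleigenvalueestimate}, absorbing the metric error terms with Young's inequality. This is precisely the argument the paper carries out in detail.

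There is one internal inconsistency you should repair. You write ``discarding the non-negative $g^{xx}(\partial_x h)^2$ piece'' immediately after substituting the eigenvalue equation, and only afterward observe that the cross-terms $-4 h g^{xy}\partial_x\partial_y h$ and $2 h g^{ij}\Gamma_{ij}^k\partial_k h$ (and $4(\partial_x g^{xx})h\,\partial_x h$) produce, after integration by parts and Young's inequality, $O(r)$-sized multiples of $\int(\partial_x h)^2$. Those contributions have nowhere to be absorbed if the $\int g^{xx}(\partial_x h)^2$ term has already been thrown away: on a fixed cross-slice there is no a priori control of $\int(\partial_x h)^2\,dy$ in terms of $\int h^2\,dy$. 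The paper explicitly retains $\int g^{xx}(\partial_x h)^2\,dy$ (and also reserves a small piece $C_2\ell\int(\partial_y h)^2\,dy$ out of the Poincar\'e step) in lines \eqref{eq:second-line}--\eqref{eq:fourth-line} precisely so that these cross-term errors can be dominated; only after the absorption is complete are the remaining positive terms dropped. Your closing sentence shows you understand that the $\int(\partial_x h)^2$ coefficient must stay positive, so the fix is simply to postpone the discard until after the error terms have been estimated, as the paper does.
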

\begin{proof}
The main idea is outlined in Section  \ref{ssec:negatively-pinched}. The work to be done here is due to the fact that the metric is not Euclidean but well controlled. 

We first compute the second derivative of $V(x)$. As mentioned, the boundary terms vanish because $V$ is of second order in $h_{}$, which takes the value zero at $\ell_+(x)$ and $-\ell_-(x)$ by the Dirichlet boundary conditions. We have 
\begin{equation}
    \label{eq:VSecondDerivative}
    \frac{1}{2}\frac{\pd^2}{\pd x^2} V(x) = \int_{-\ell_-(x)}^{\ell_+(x)} \left( \frac{1}{2} \pd^2_{xx}(g^{xx}) h_{}^2 + 2 (\pd_xg^{xx}) h_{} \pd_x h_{} + g^{xx} (\pd_x h_{})^2 + g^{xx} h_{} \pd^2_{xx} h_{} \right) \, dy.
\end{equation}
The last term is replaced by an expression containing the Laplacian. Recall that $\Delta h = g^{ij}\pd^2_{ij} h - g^{ij}\Gamma_{ij}^k \pd_k h$ and that $h(\partial_{yy}^2 h)=\frac{1}{2} \partial_{yy}^2 (h^2) -(\partial_{y} h)^2$ for any function $h$. These facts and integration by parts give
\begin{align}
 \int_{-\ell_-(x)}^{\ell_+(x)} g^{xx} h_{}\partial^2_{xx} h_{} \, dy & = \int_{-\ell_-(x)}^{\ell_+(x)} h_{}\Delta h_{}   dy -  \int_{-\ell_-(x)}^{\ell_+(x)} g^{yy} h_{} \pd^2_{yy} h_{} \, dy  \nonumber \\
&  \ \ \ - 2\int_{-\ell_-(x)}^{\ell_+(x)} g^{xy} (\partial^2_{x y} h_{}) h_{} \, dy + \int_{-\ell_-(x)}^{\ell_+(x)} g^{ij}\Gamma_{ij}^k (\partial_k h_{}) h_{} \, dy \nonumber, \\
&= 
\label{eq:laplacian-gradienty}
 - \int_{-\ell_-(x)}^{\ell_+(x)} \lambda_1 h_{}^2   dy + \int_{-\ell_-(x)}^{\ell_+(x)}   g^{yy}(\partial_{y} h_{} )^2  +(\pd_y g^{yy}) h_{} \pd_y h_{}\, dy \\
&  \ \ \ - 2\int_{-\ell_-(x)}^{\ell_+(x)} g^{xy} (\partial^2_{x y} h_{}) h_{} \, dy + \int_{-\ell_-(x)}^{\ell_+(x)} g^{ij}\Gamma_{ij}^k (\partial_k  h_{})  h_{} \, dy \nonumber.
\end{align}
Recall that $\ell(x)$ or simply $\ell$ is $\max(\ell_-(x), \ell_+(x))$. 
The upper bound on the first eigenvalue will be used to estimate the first term of \eqref{eq:laplacian-gradienty}. For the second term, we recall that $|g^{ii} - 1| \leq C \ell^2$ thus  $(1-C\ell^2) \int_{-\ell_-(x)}^{\ell_+(x)} \phi^2 
\, dy \leq \int_{-\ell_-(x)}^{\ell_+(x)} g^{ii} \phi^2 \, dy\leq (1 + C \ell^2) \int_{-\ell_-(x)}^{\ell_+(x)} \phi^2 \, dy$ for any function $\phi$, where $i$ could be $x$ or $y$. We choose to leave out a term of order $C \ell$ to absorb some of the errors in \eqref{eq:VSecondDerivative} and \eqref{eq:laplacian-gradienty} so the second term is bounded as follows, where the constants $C_i$ have the same dependence as our constant $C$ and were just labelled for a bit of clarity, 
\begin{align}
\nonumber\int_{-\ell_-(x)}^{\ell_+(x)}   g^{yy}(\partial_{y} h_{} )^2 \, dy & \geq  (1 - C \ell^2) \int_{-\ell_-(x)}^{\ell_+(x)} (\partial_{y} h_{} )^2 \, dy = (1-C_1\ell + C_2 \ell) \int_{-\ell_-(x)}^{\ell_+(x)} (\partial_{y} h_{} )^2 \, dy \\ 
\nonumber&\geq \frac{(1-C_1 \ell) \pi^2}{4 \ell^2} \int_{-\ell_-(x)}^{\ell_+(x)} h_{}^2 \, dy + C_2 \ell \int_{-\ell_-(x)}^{\ell_+(x)} (\partial_{y} h_{} )^2 \, dy  \\
\label{eq:estimate-gradienty}
&\geq \frac{(1-C_3 \ell) \pi^2}{ 4 \ell^2} V(x)+ C_2 \ell \int_{-\ell_-(x)}^{\ell_+(x)} (\partial_{y} h_{} )^2 \, dy. 
\end{align}
Combining \eqref{eq:VSecondDerivative}, \eqref{eq:laplacian-gradienty}, \eqref{eq:estimate-gradienty}, and rearranging terms, we obtain
\begin{align}
\label{eq:first-line}
    \frac{1}{2}\frac{\pd^2}{\pd x^2} V(x) &\geq \left[\frac{(1-C_3\ell) \pi^2}{4 \ell^2} - \frac{\pi^2 }{4 (1-\delta)^2r^2 \cosh(L/2)} \right] V(x) \\
    \label{eq:second-line}
    &\ \ \ +\int_{-\ell_-(x)}^{\ell_+(x)}g^{xx} (\pd_x h_{})^2 + C_2 \ell (\pd_y h_{})^2\, dy \\
    \label{eq:third-line}
    &\ \ \  + \int_{-\ell_-(x)}^{\ell_+(x)} \frac{1}{2} \pd^2_{xx}(g^{xx}) h_{}^2 \, dy - 2\int_{-\ell_-(x)}^{\ell_+(x)} g^{xy} (\partial^2_{x y} h_{}) h_{} \, dy \\
    \label{eq:fourth-line}
&  \ \ \ + \int_{-\ell_-(x)}^{\ell_+(x)} g^{ij}\Gamma_{ij}^k (\partial_k  h_{})  h_{} + 2 (\pd_xg^{xx}) h_{} \pd_x h_{} + (\pd_yg^{yy}) h_{} \pd_y h_{}  \, dy.
\end{align}
The first line \eqref{eq:first-line} is what we seek. The coefficient in front of $V(x)$ is positive for $\delta$ small and $|x|< L/9$ by \eqref{eq:vertical-length}. The terms on the second line \eqref{eq:second-line} are positive and are used to absorb the error terms from the third and fourth lines \eqref{eq:third-line} and \eqref{eq:fourth-line} in what follows. 

\subsubsection{ Error terms} 

The first term in \eqref{eq:third-line} can be bounded using estimates of the metric \eqref{eq:pd2metric-estimates} and \eqref{eq:metric-estimates}:
 \[ 
   \left|\int_{-\ell_-(x)}^{\ell_+(x)} \frac{1}{2} \pd^2_{xx}(g^{xx}) h_{}^2 \, dy \right| \leq C \ell^2 V(x).
  \] 
The second term is handled with integration by parts  
\[
\left| \int_{-\ell_-(x)}^{\ell_+(x)} g^{xy} (\partial^2_{x y} h_{}) h_{} \, dy\right| = \left|  \int_{-\ell_-(x)}^{\ell_+(x)} -( \pd_y g^{xy})( \pd_x h_{} ) h_{} - g^{xy} \pd_x h_{} \pd_y h_{} \, dy \right| 
.
\]
The term involving $(\pd_x h_{}) h_{}$ is estimated like the ones in \eqref{eq:fourth-line} as shown below. For the last one, we use \eqref{eq:metric-estimates} and obtain $\left| \int_{-\ell_-(x)}^{\ell_+(x)} g^{xy} \pd_x h_{} \pd_y h_{} \, dy \leq C \ell^2 \int_{-\ell_-(x)}^{\ell_+(x)} (\pd_x h_{})^2 + (\pd_y h_{})^2 \, dy\right| \leq C \ell^2 \left(\int_{-\ell_-(x)}^{\ell_+(x)} g^{xx} (\pd_x h_{})^2 \, dy + (\pd_y h_{})^2 \, dy\right) $. This can be absorbed by the terms in \eqref{eq:second-line}. 
   The terms in \eqref{eq:fourth-line} are treated similarly using estimates from Proposition \ref{prop:estimates-metric} and the Peter-Paul's a.k.a. Young's inequality:
   \begin{align*}
    \left| \int_{-\ell_-(x)}^{\ell_+(x)} 
      (\pd_yg^{yy}) h_{} \pd_y h_{}  \, dy \right| & 
       \leq C \ell \int_{-\ell_-(x)}^{\ell_+(x)} |h_{} \pd_y h_{}| \, dy \\
      & \leq C \ell \int_{-\ell_-(x)}^{\ell_+(x)} \frac{C_2}{2C}(\pd_y h_{})^2 + \frac{C}{C_2} (h_{})^2 \, dy \\
      & \leq \frac{C_2}{2} \ell \int_{-\ell_-(x)}^{\ell_+(x)} (\pd_y h_{})^2 \, dy + C \ell V(x).
   \end{align*}
  Note that in this expression we have used $C_2$ to indicate that the constants do not cancel.
Rearranging terms and invoking \eqref{eq:vertical-length}, we find that for $\delta$ and $r$ sufficiently small and $|x| < L/9$, we have

    \begin{equation*}
        \partial^2_{xx} V(x) \geq  \frac{C}{r^2} V(x). \qedhere
    \end{equation*} 
\end{proof}

Integrating this differential inequality, we find that the $V(x)$  grows exponentially through the neck with doubling radius comparable to $r$ in the neck. 
Before moving on, let us note one consequence of this estimate which will be used at the very end of the proof.

\begin{lemma}\label{Needed for the continuity argument}
If $t$ is less than $\frac{L}{9}$, $V(x)$ is decreasing for $x >  -\frac{L}{9}$. Furthermore, there exists a $c>0$ so that $V(x) \ll \exp(-c/r)$ for $x >  -\frac{L}{9}$.
\end{lemma}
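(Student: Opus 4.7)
The strategy is to feed Lemma~\ref{lem:second-derivative-V} into a one-dimensional comparison principle on the interval $I := [-L/9, t]$, which is the overlap of the region of validity $(-L/9, L/9)$ with the domain $[-L+t, t]$ under the assumption $0 < t < L/9$. On $I$ we have $V'' \geq (C/r^2)\,V \geq 0$, so $V$ is convex, with $V(t) = 0$ (from the Dirichlet condition for $h$ along $\sigma_t$) and $V > 0$ on the interior $(-L+t, t)$ (positivity of the first Dirichlet eigenfunction). For the monotonicity claim, for any $x_1 < x_2$ in $[-L/9, t)$, convexity together with $V(t) = 0$ gives
\[
V(x_2) \;\leq\; \frac{t-x_2}{t-x_1}\,V(x_1) + \frac{x_2-x_1}{t-x_1}\,V(t) \;=\; \frac{t-x_2}{t-x_1}\,V(x_1) \;\leq\; V(x_1),
\]
so $V$ is non-increasing on $I$; strict monotonicity follows because a flat subinterval would, via $V'' \geq (C/r^2) V$, force $V \equiv 0$ there, contradicting interior positivity.

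For the exponential bound I would compare $V$ with the explicit barrier
\[
u(x) \;:=\; V(-L/9)\,\frac{\sinh\!\bigl(k(t-x)\bigr)}{\sinh\!\bigl(k(t+L/9)\bigr)}, \qquad k \;:=\; \tfrac{\sqrt{C}}{r},
\]
with $C$ from Lemma~\ref{lem:second-derivative-V}. Then $u'' = k^2 u$ and $u$ agrees with $V$ at both endpoints of $I$, so $w := V - u$ vanishes at $x = -L/9$ and $x = t$ and satisfies $w'' \geq k^2 w$. A positive interior maximum of $w$ would give $w'' \leq 0 < k^2 w$, contradicting this inequality; the maximum principle therefore yields $V \leq u$ on $I$, and estimating $\sinh$ by $\exp$ gives
\[
V(x) \;\leq\; 2\,V(-L/9)\,\exp\!\Big(-\tfrac{\sqrt{C}\,(x+L/9)}{r}\Big), \qquad x \in I.
\]

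The only remaining ingredient, and the mild obstacle, is a bound on $V(-L/9)$: because $-L/9$ is an interior point of $\Omega_{r,L,t}$, no Dirichlet condition is available there. I would settle for the crude bound coming from $\|h\|_{L^\infty} = 1$, the estimate $g^{xx} \leq 1 + Cr^2$ of \eqref{eq:g-dg}, and the slice-length bound $\ell_\pm(-L/9) \leq (1+2\delta)(r/K)\cosh(KL/9)$ from \eqref{eq:vertical-length}, which give $V(-L/9) \leq Cr$. Combined with the previous display,
\[
V(x) \;\leq\; C r\,\exp\!\Big(-\tfrac{\sqrt{C}\,(x+L/9)}{r}\Big),
\]
so for any fixed $x > -L/9$, choosing $0 < c < \sqrt{C}\,(x+L/9)$ makes the polynomial prefactor $Cr$ negligible and yields $V(x) \ll e^{-c/r}$ as $r \to 0$. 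This is the advertised exponential smallness; the payoff is that even though $V(-L/9)$ is only polynomially small, the exponential decay rate $\sqrt{C}/r$ in the neck completely overwhelms it.
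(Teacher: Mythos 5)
Your argument is correct and follows the same route as the paper's (which simply says ``integrate Inequality~\eqref{eq:pd2V-estimate}''): use $V(t)=0$ and the convexity supplied by $V''\geq (C/r^2)V$ on the overlap of $(-L/9,L/9)$ with $[-L+t,t]$ to get monotonicity, then compare $V$ with a solution of $u''=(C/r^2)u$ matching endpoint data to get the exponential decay, seeded by the crude bound $V(-L/9)\leq Cr$ from \eqref{eq:V-upperbound}. Your closing remark correctly identifies that the constant $c$ obtained this way degenerates as $x\to (-L/9)^{+}$; the lemma's ``for $x>-L/9$'' should be read as holding for each fixed such $x$ (or for $x$ bounded away from $-L/9$), which is all the continuity argument in Subsection~\ref{subsec: Finishing the proof} actually uses.
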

\begin{proof}
When $t$ is small (i.e., less than $\frac{L}{9}$), the domain is mostly to the left of the neck. From the fact that $V(x) \to 0$ as $x \to t$ because of the Dirichlet conditions, we must have that $V(x)$ is decreasing for $x$ sufficiently large (e.g., $x> -\frac{L}{9}$). The reason for this is that $V(x)$ is convex in the neck, and the neck of $\Omega_{L,r}$ includes the right edge of $\Omega_{L,r,t}$.

To obtain the bound on $V(x)$, we integrate Inequality \eqref{eq:pd2V-estimate}. 
\end{proof}
Similarly, if $t$ is large, we have that $V(x)$ is increasing for $x$ in the neck. This gives a quantitative estimate to show that if the  domain is mostly on one side of the neck, the bulk of the principal eigenfunction must also be on that side.

\subsection{Supremum bounds through the neck}
We show pointwise bounds for the first eigenfunction in the neck region by appealing to a gradient bound on Dirichlet eigenfunctions in bounded domains. 

\subsubsection{An $L^\infty$ bound on the gradient}

The work of Arnaunden et al. \cite{arnaudon2020gradient} shows that for any eigenfunction $h$ of the Laplace operator on $\Omega$ with Dirichlet boundary conditions and corresponding eigenvalue $\lambda$, we have the estimate
\begin{equation}\label{gradient bounds}
 \| \nabla h \|_\infty \leq \|h \|_\infty \inf_t c(t) e^{\lambda t},
\end{equation}
where the function $c(t)$ is defined as
\[c(t) = 9.5 \alpha_{0}+\frac{2 \sqrt{\alpha_{0}}\left(1+4^{2 / 3}\right)^{1 / 4}\left(1+5 \times 2^{-1 / 3}\right)}{(t \pi)^{1 / 4}}+\frac{\sqrt{1+2^{1 / 3}}\left(1+4^{2 / 3}\right)}{2 \sqrt{t \pi}}, \]
and $$
\alpha_{0}=\frac{1}{2} \max \{\theta, \sqrt{(n-1) K}\},
$$
where $-K$ is a lower bound for the Ricci curvature on $\Omega$ and $-\theta$ is a lower bound on the mean curvature of $\partial \Omega$. Because all our domains $\Omega$ are convex, the mean curvature is non-negative, so $\alpha_0 \leq \frac{1}{2}\sqrt{(n-1) K}$.  Taking $t = \frac{1}{\lambda}$ in \eqref{gradient bounds}, we have that
$ \| \nabla h \|_\infty \leq (C_1  + C_2\sqrt{\lambda}) \|h \|_\infty$. 
Throughout the rest of the paper, we will ignore the $C_1$ term in this estimate, since our bounds on the principal eigenvalue \eqref{principaleigenvalueestimate} will be very large. Thus
  \begin{equation}
      \label{eq:h1-gradient-estimate}
      \| \nabla h_1 \|_{\infty} \leq C \frac{\| h_1 \|_{\infty}}{r \cosh(L/4) }.
  \end{equation}

We can now show that the first eigenfunction $h_{}$ is small in the neck $N:= \{(x, y) \mid |x| < L/18 \}$. In order for our domains to contain $N$, we will consider $\Omega_{r, L, t}$ for $t \in [L/9, 8L/9]$.

\begin{lemma}
\label{lem:h1-small-in-neck}
Let  $t \in [L/9, 8L/9]$ and let $h_{}$ be the first eigenfunction of the Laplacian with Dirichlet boundary conditions on $\Omega_t=\Omega_{r,L,t}$ normalized so that $\| h_{} \|_{L^{\infty}(\Omega_{r,L,t})} =1$.  We have 
\begin{equation}
    \label{eq:h1-bound-neck}
\|h_{} \|_{L^{\infty}(N)} \leq C \exp( - c/r)
\end{equation}
for some constants $c$ and $C$ depending on $\delta, L, R, \nabla R, \nabla^2 R$. 
\end{lemma}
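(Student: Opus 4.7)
The plan is to combine the exponential growth estimate for $V$ in Lemma \ref{lem:second-derivative-V} with the gradient bound \eqref{eq:h1-gradient-estimate} to turn $L^2$-type smallness of $V$ into a pointwise bound on $h_1$ in the neck. The hypothesis $t\in[L/9,8L/9]$ guarantees that both $x=-L/9$ and $x=L/9$ lie inside $\Omega_{r,L,t}$, so the differential inequality $\partial^2_{xx} V\geq (C/r^2)\,V$ of Lemma \ref{lem:second-derivative-V} is valid on all of $[-L/9,L/9]$, with a buffer of $L/18$ separating the neck $N=\{|x|<L/18\}$ from these endpoints.

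First I would derive crude boundary values for $V$. Since $\|h_1\|_\infty=1$ and $g^{xx}=1+O(r^2)$, the slice-length estimate \eqref{eq:vertical-length} gives $V(\pm L/9)\leq Cr$. Setting $k=\sqrt{C}/r$, let $U(x)$ be the solution of $U''=k^2U$ on $[-L/9,L/9]$ that matches $V$ at the endpoints; explicitly $U(x)\leq Cr\,\cosh(kx)/\cosh(kL/9)$. The difference $V-U$ vanishes at the endpoints and satisfies $(V-U)''\geq k^2(V-U)$, and a positive interior maximum of $V-U$ would force the second derivative to be nonpositive there, a contradiction. Hence $V\leq U$ on $[-L/9,L/9]$, and evaluating at $|x|\leq L/18$ yields
\[
V(x)\;\leq\; Cr\exp\!\left(-cL/(18 r)\right)\;\leq\; C\exp(-c/r).
\]

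Finally I would upgrade this integral bound to the desired $L^\infty$ bound through the gradient estimate. The bound \eqref{eq:h1-gradient-estimate} together with $\|h_1\|_\infty=1$ gives $\|\nabla h_1\|_\infty\leq C/r$, so if $M_0=|h_1(x_0,y_0)|$ for some $(x_0,y_0)\in N$, then $|h_1(x_0,y)|\geq M_0/2$ on an interval of length of order $M_0 r$ centered at $y_0$. Integrating the defining expression for $V(x_0)$ over this interval gives $V(x_0)\geq c\,M_0^3\,r$, so combining with the previous step yields $M_0\leq C\exp(-c'/r)$, which is \eqref{eq:h1-bound-neck}. The main delicacy is bookkeeping: since the ODE comparison produces an exponent proportional to $L/r$ with a definite positive rate, the factor of $1/3$ from the cubic conversion is harmless and still leaves a positive universal constant; and the edge case in which the interval $\{|y-y_0|\leq M_0 r/(2C)\}$ would exceed the slice width arises only when $M_0$ is of order one, a regime inconsistent with the target bound and therefore absorbable into the constant $C$.
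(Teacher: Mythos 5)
Your proposal is correct and follows essentially the same route as the paper: both derive the crude upper bound $V(x)\leq Cr$ near the neck, use the differential inequality $V''\geq (C/r^2)V$ with an ODE comparison to force exponential smallness of $V$ on $N$, and then convert to an $L^\infty$ bound on $h_1$ via the gradient estimate and the cubic lower bound $V(x_0)\gtrsim M_0^3 r$. The only cosmetic difference is in the comparison step (the paper centers a $\cosh$ barrier at $x_0$ with $f'(x_0)=0$ and chooses the direction of growth according to the sign of $V'(x_0)$, whereas you impose boundary data at $\pm L/9$ and invoke the maximum principle), and you could omit the discussion of the "edge case" since the gradient bound together with $h_1=0$ on $\partial\Omega$ automatically keeps the interval $\{|y-y_0|\leq M_0 r/(2C)\}$ inside the slice.
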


\begin{proof}

A crude upper bound for $V(x)$ with \eqref{eq:g-dg} and \eqref{eq:vertical-length} is 
  \begin{equation}
      \label{eq:V-upperbound}
  V(x) \leq  (1+C \ell(x)^2) 2 \ell(x) \leq 2 (1+ C \ell(x)^2) (1+2 \delta) r \exp (2L/9) \leq  C r 
  \end{equation}
  for $r$ small enough and $|x| \leq L/9$. For a lower bound on $V(x)$, we let $a=\sup_{y} h(x_0,y)$, consider the case where the eigenfunction decays from $a$ as quickly as possible given the gradient estimate, and use  \eqref{eq:h1-gradient-estimate} for the last inequality to obtain
\begin{equation}
\label{eq:V-lowerbound}
 V(x_0) \geq  2 (1 - C \ell^2(x_0))  \int_0^{a/\|\nabla h\|_{\infty}} \left( \|\nabla h \|_\infty y \right)^2 \,dy 
 = \frac{2(1 - C \ell^2(x_0))}{3} \frac{a^3}{\| \nabla h \|_{\infty}}  
 \geq   C a^3 r. 
\end{equation}

Given $x_0$ with $|x_0| < L/18$, we compare the function $V$ to $f(x) =  V(x_0) \cosh\left(\tfrac{\sqrt C}{r}(x-x_0) \right)$ for $C$ as in \eqref{eq:pd2V-estimate}, which is the solution to $f'' = \frac{C}{r^2} f$ with initial conditions $f(x_0) = V(x_0)$ and $f'(x_0) =0$. If $V'(x_0)\geq 0$, we pick $x_1=x_0 + L/18$ otherwise choose $x_1 = x_0 - L/18$. We have 
    \[
    V(x_1) \geq f(x_1) \geq \frac{V(x_0)}{2} \exp\left(\tfrac{\sqrt{C}L}{18r}\right).
    \]
Combining this with \eqref{eq:V-upperbound} for $V(x_1)$ and \eqref{eq:V-lowerbound} for $V(x_0)$, we get
    \[
    Cr \exp(-3c/r) \geq V(x_0) \geq C r \left(\sup_{y} h(x_0, y)\right)^3.
    \]
Since $x_0$ was arbitrary as long as $|x_0| < L/18$, we  get \eqref{eq:h1-bound-neck} as desired. 
\end{proof}

As a consequence of \eqref{eq:h1-bound-neck}, we find that the supremum of $h$ cannot happen when $x$ is too small, so the supremum of $h$ occurs outside of the neck.

Before concluding this section, it is worthwhile to take stock of what we have established.

\begin{enumerate}[label=(\roman{enumi})]
    \item The principal eigenfunction ``doubles" rapidly through the neck of the domain, which is to say that the size of $h$ doubles at a length scale of $r$ in the horizontal direction.
    \item The eigenfunction is extremely small, both in terms of the $L^2$-norm and supremum norm in this region.
    \item When $t$ is sufficiently small, the supremum of $h$ and the vast majority of its $L^2$ mass lies to the left of the neck. On the other hand, when $t$ is sufficiently large, the supremum of $h$ and vast majority of its $L^2$ mass the lies to the right of the neck.
\end{enumerate}

\section{Applying a cut-off function through the neck}

\label{Cutoff analysis 2D}

In order to show that the fundamental gap vanishes, we find another function whose Rayleigh quotient is very close to that of $h$. We do so in two steps: first consider a function $v:\mathbb{R}^+ \to \mathbb{R}^+$ which rapidly vanishes as the input goes to zero, then construt a function $\psi(x)$ that satisfies $ |\nabla \psi| \leq \frac{2}{v(r)}$ and 
  \[
  \psi(x) = \begin{cases}
        1 & x> v(r) \\
        -1 & x< -v(r)
        \end{cases}
   \]

In two dimensions, it is possible to complete the rest of the proof using $v(r) = r^4$. For the argument to work independent of the dimension, we take 
 \begin{equation} \label{Cut-off regime}
     v(r) = \exp(-\delta/r )
 \end{equation}
 for $\delta$ sufficiently small. Note that the function $\psi$ differs from the one in \cite{bourni2022vanishing} in that it switches signs over a much smaller region. This allows us to avoid deriving integral estimates for the gradient of $h$ through the neck.

    \begin{figure}[htbp]
        \centering
        \includegraphics[width=.9\linewidth]{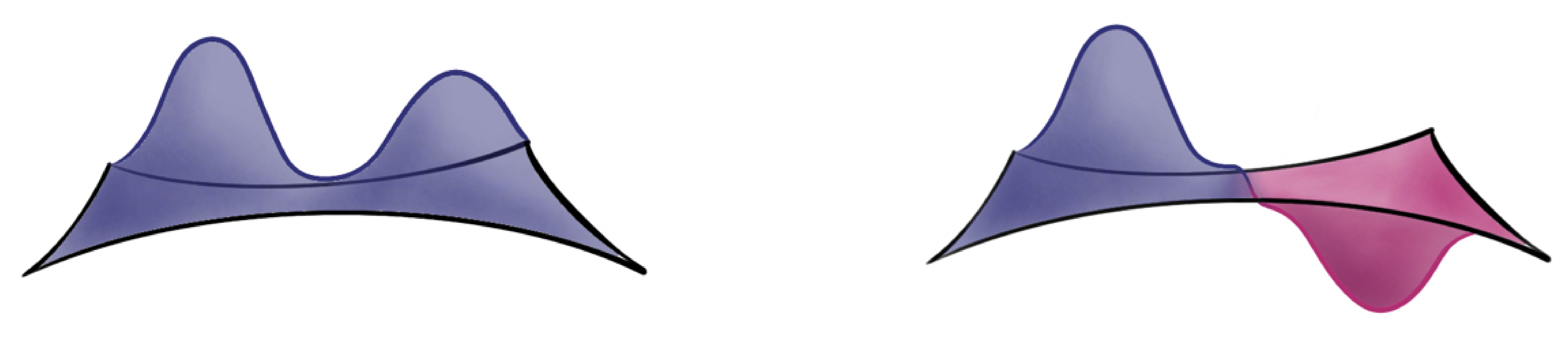}
        \caption{The graphs of the first eigenfunction $h$ and of the ansatz  $\psi h$.}
        \label{fig:handsh}
    \end{figure}
    
The difference in the Rayleigh quotients of $\psi  h$ and $h$ is given by 

\begin{equation}
\label{eq:diff-RQ}
\mathcal R[\psi h] - \mathcal R[h]= \left \vert \frac{\int_{\Omega_t} |\nabla  (\psi h)|^2}{\int_{\Omega_t} |\psi h|^2} - \frac{\int_{\Omega_t} |\nabla h|^2}{\int_{\Omega_t} |h|^2} \right \vert.
\end{equation}
   It is supported within the neck  $N_r = \{ (x,y) \in \Omega_t ~|~  -v(r) <x< v(r) \},$  so setting \[A = \int_{\Omega_t \backslash N_r} |\nabla h|^2 \textrm{ and } B = \int_{\Omega_t \backslash N_r} |h|^2,\]
   the expression in \eqref{eq:diff-RQ} simplifies to the following
\begin{eqnarray*}
\left \vert \frac{A \left ( \int_{N_r} h^2 -(\psi h)^2 \right)}{\left( B + \int_{N_r} (h)^2 \right) \left( B + \int_{N_r} (\psi h)^2 \right) } +\frac{B \left ( \int_{N_r} |\nabla (\psi h)|^2 - |\nabla h|^2 \right)}{\left( B + \int_{N_r} (h)^2 \right) \left( B + \int_{N_r} (\psi h)^2 \right) } \right \vert. 
\end{eqnarray*}
We consider the two terms separately, and bound each of them. Define 

\begin{equation*}
    \textrm{I} = \left \vert \frac{A \left ( \int_{N_r} h^2 -(\psi h)^2 \right)}{\left( B + \int_{N_r} h^2 \right) \left( B + \int_{N_r} (\psi h)^2 \right) } \right \vert, 
        \qquad
   \textrm{II} = \left \vert \frac{B \left ( \int_{N_r} |\nabla (\psi h)|^2 - |\nabla h|^2 \right)}{\left( B + \int_{N_r} h^2 \right) \left( B + \int_{N_r} (\psi h)^2 \right) } \right \vert. 
    \end{equation*}
    
\subsection{Controlling $\textrm{I}$}

We have that $A \leq \int_{\Omega_t} |\nabla h|^2$. Thus, by combining this term with the first one in the denominator we have that

\begin{equation}
    \label{eq:firstboundI}
\textrm{I} \leq \lambda_1   \frac{ \left ( \int_{N_r} h^2 -(\psi h)^2 \right)}{\left( B + \int_{N_r} (\psi h)^2 \right) } 
 \leq  \lambda_1   \frac{  \int_{N_r} h^2  }{ B }
\end{equation}
Since we have normalized so that $\|h\|_\infty =1$, (and the point where this is achieved is not near the neck $N_r$), the gradient estimate implies the bound
 \begin{equation}\label{L2 lower bound}
     B > \int_0^{2 \pi} \int_0^{\frac{1}{\|\nabla h\|_\infty}}(1- \|\nabla h\|_\infty \rho)^2 \rho \,d \rho \, d\theta = \frac{\pi}{6} \frac{1}{\|\nabla h\|^2_\infty} \geq C r^2,
 \end{equation}
where we have used the gradient estimate to observe that $\lambda_1 \leq \frac{C}{r^2}$.

Finally, using $L^\infty$ estimate through the neck, we have that
 \begin{equation}
 \label{eq:boundinneck}
  \int_{N_r} h^2  \leq 2 \underbrace{2 v(r)}_{\text{ width of $N_r$}}   \underbrace{(1+2 \delta) r \exp(2 v(r))}_{\text{ height of $N_r$}}  \underbrace{ C^2 \exp(- 2 c/r)}_{ C^0 \text{ bound on }h^2 }
 \end{equation}
where the initial factor of $2$ serves to account for how the metric deviates from a Euclidean one. 

Combining \eqref{eq:firstboundI}, \eqref{L2 lower bound}, and \eqref{eq:boundinneck}, we have that 
\begin{equation} \label{Final estimate on mathcal A}
    \textrm{I} \leq \frac{C}{r^3}  v(r) \exp(2 v(r)) \exp(-2c/r) ,
\end{equation}
which implies that
$ \lim_{r \to 0 } \textrm{I} = 0.$

\subsection{Controlling $\textrm{II}$}
We have 
    \begin{align*}
        \textrm{II} &\leq  \left \vert \frac{ \left ( \int_{N_r} |\nabla (\psi h)|^2 - |\nabla h|^2 \right)}{\left( B + \int_{N_r} h^2 \right) } \right \vert 
         \leq  \frac{ \int_{N_r} 2 h^2 |\nabla \psi|^2 +2  |\nabla h|^2 }{ \|h\|_2^2}. 
    \end{align*}
Using Inequality \eqref{L2 lower bound} and the gradient estimate, we have that

    \begin{align}
        \textrm{II} & \leq  \frac{2 \text{Vol}(N_r) \left( C^2 \exp(-2c/r)\frac{2^2}{v(r)^2} + 2 \frac{C}{r^2} \right) }{  C r^2 } \nonumber \\
        & \leq   \frac{4 r v(r) \left( C^2 \exp(-2c/r) \frac{2^2}{v(r)^2} + 2 \frac{C}{r^2} \right) }{  C r^2 }, \label{Final estimate on mathcal B}
    \end{align}
where the extra factor of $2$ accounts for the deviation from a Euclidean domain.  By taking $\delta < 2c,$ we can see that this expression goes to zero as $r \to 0.$

\subsection{A final continuity argument}

\label{subsec: Finishing the proof}

For most values of $t$, the function $\psi h$ will not be orthogonal to the principal eigenfunction $h$ on $\Omega_{r,L,t}$. However, by varying $t$ from $0$ to $L$,  the function 
\[ F(t) = \int_{\Omega_t} (\psi  h) \cdot h  \]
will depend continuously on $t$ by the continuity of solutions to linear equations in terms of their coefficients.  Lemma \ref{Needed for the continuity argument} forces $F(t)$ to be negative whenever $t< \frac{L}{9}$ and $I(t)$ to be positive whenever $t> \frac{8L}{9}$. Therefore, there must be a $t_0$ for which this integral vanishes. 
This completes the proof, since equations \eqref{Final estimate on mathcal A} and \eqref{Final estimate on mathcal B} show that the difference between the Rayleigh quotients of $\psi  h$ and $h$ is very small (and can be made arbitrarily small by letting $r$ go to zero). For $t=t_0$, $\lambda_2(\Omega_{r, t_0,L}) - \lambda_1(\Omega_{r,t_0,L}) \leq \mathcal R(\psi h) - \mathcal R(h)$, so the difference between the eigenvalues must also go to zero.

Before moving on, it is worth remarking about the quantitative estimates we have shown. More precisely, we have shown that we can find convex domains $\Omega$ of diameter $L$ and inscribed radius $r$ so that
\[ \lambda_2(\Omega)-\lambda_1(\Omega) < C e^{-cr^{-1}} \]
where $C$ and $c$ are constants depending on the curvature, its derivatives and $\exp(L/2)-1$. 

With sharper estimates, it is possible to refine the quantities $C$ and $c$. However, from a qualitative perspective, we expect the form of this expression to be sharp as this is the estimate in the hyperbolic case, where the estimates are qualitatively sharp.

 Using a more refined technique to define the domains, one can prove the same theorem under the weaker assumption that there exists a minimizing geodesic of length $D + \epsilon$ rather than $2D$, which we will do in Subsection \ref{A new continuity argument}. However, that argument is more technical, so we have used sliding domains here.

\section{Any negative curvature annihilates fundamental gaps} 
\label{sec:Three dimensional case}

We now turn our attention to constructing convex domains with arbitrarily small fundamental gaps in higher dimensions. The strategy of creating a neck is the same, although when the sign of the curvature is mixed this is more of a \emph{flattening} since we have extra dimensions. The estimates are substantially more delicate. For concreteness we only work in three-dimensions, which makes the book-keeping easier but does not simplify the argument in any substantial way.

From our hypothesis, there is a tangent plane of negative sectional curvature. We consider a geodesic $\gamma(x)$ of length $2L_0$ parametrized from $-L_0$ to $L_0$ so that $\gamma(0)$ is the base point of the aforementioned plane and $\gamma'(0) = e_1$ belong to the plane.   At every $\gamma(x)$, the quadratic form
    \begin{equation}
        \label{eq:Rquadratic}
    g( R(X,\dot \gamma) Y,  \dot \gamma) 
    \end{equation}
is symmetric. Let $\xi_2(x)$ and $\xi_3(x)$ be eigenvectors of \eqref{eq:Rquadratic} of length one with corresponding eigenvalues $\kappa_2(x)$ and $\kappa_3(x)$. For our Fermi coordinates, we choose $e_2 = \xi_2(0)$ and $e_3 = \xi_3(0)$ at $x=0$. We will assume that $\kappa_2(0)$ is negative, but make no assumptions on $\kappa_3(0)$. The proof is substantially harder when $\kappa_3(0)$ is positive. Let $(x,y,z)$ be Fermi normal coordinates centered around $\gamma$ and, for convenience, suppose that $y$ and $z$ are both bounded by $1$ in absolute value. 

Before we start discussing the proof, let us fix some notations: 
\begin{definition}
\label{def:curvature-bounds}
We denote by 
  \begin{align*}
  K_2 = \max_{x \in [-L_0, L_0]} \kappa_2(x),\quad 
  K_3 = \max_{x \in [-L_0,L_0]} |\kappa_3(x)|,
  \end{align*}
  and 
  \[
  K = \max_{x \in [-L_0, L_0]} \max_{X, Y \in T_{\gamma(x)} M} |\kappa(X,Y)|,
  \]
  where $\kappa(X,Y)$ is the sectional curvature. 
  \end{definition}

For now, let us choose $L_0$ so that
\begin{equation} \label{Curvature pinching along geodesic}
   \frac{9 \kappa_2(0)}{8} < k_2 \leq  K_2 
    < \frac{7 \kappa_2(0)}{8} < 0. 
\end{equation}
   The length $L\leq L_0$ will be determined later in the proof. 
In order to organize the geometric quantities involved, we start by labelling our directions.

\begin{definition}
The $x$ direction is the length, which ranges from $-L$ to $L$. The $y$ direction is the height. The $z_1, \ldots, z_{n-2}$ directions are the depth. In three dimensions, there is only a single depth direction.
\end{definition}

\subsection{The strategy}
\label{ssec:strategy}
We described the hurdles caused by the possible presence of positive curvature and how to overcome them. 
 

  \begin{enumerate}[label=(\roman{enumi})]
    \item Since we are studying convex domains, the problem comes down to understanding the behavior of geodesics in our space. Instead of studying geodesics directly, we can reduce the problem to analyzing Jacobi fields along $\gamma$. 
%
    More precisely,
    if $p$ and $q$ are two points which are at distance at most $r$ from $\gamma$, then the geodesic $\gamma_1$ between these two points can be parametrized as
      $ \gamma_1(x) = \exp_{\gamma(x)} (J(x)+O(r^2)),$ 
    where $J(x)$ is the Jacobi field whose boundary conditions are $\log_{\gamma(x(p))}(p)$ and $\log_{\gamma(x(q))}(q)$. 
%
    Here, the $O(r^2)$ error term can be bounded by the length of the geodesic and the $C^2$ norm of the curvature (as in the two-dimensional case). 
    For a geodesic of bounded length, we will always be able to choose $r$ small enough so that the dominant term in the analysis is the Jacobi field. Thus, if we can establish an estimate at the level of Jacobi fields, it will hold for geodesics.
    \item In our two-dimensional proof, we did not directly use the fact that the \emph{size} of the neck was small, but instead that the vertical cross-slices through the neck had a very large principal eigenvalue. We want to replicate the same phenomenon here. The main contribution to the first eigenvalue of the cross-slices should come from  the negative curvature direction (i.e. the height).  For this reason, the depth will be larger than the height and the ratio of depth to height will be determined by a large constant $\rho$. 
    \item In our chosen Fermi coordinates, the quadratic form \eqref{eq:Rquadratic} is not diagonal for all $x$. To keep track of the rotation, we define $\theta(x)$ to be the angle between $\xi_2(x)$ and $\frac{\partial}{\partial y}$. 
    By choosing $L$ small enough, $\theta$ will stay small, and the Jacobi field equations ``almost decouple". The length $L$ will depend on the derivative of the curvature tensor.
    \item The Jacobi fields satisfy a system of equations in coordinates. Because we control $\theta$, we can compare the solutions to our Jacobi equations to solutions to a nearby system where the equations are decoupled. We apply comparison arguments to show that given two points whose height is bigger than $1$, the height of the Jacobi field is uniformly strongly concave between these two points.
    \item We create a polyhedron in Fermi coordinates which roughly resembles an orthogonal parallelipiped whose ratio of depth to height is $\rho$. We then apply the previous strong concavity to show flattening in the height near the middle when the heights on the ends are big enough. In this construction, the precise polyhedron has a parameter $\alpha$ which slightly deforms it from being an orthogonal parallelipiped.
     Since the estimate for flattening is uniform in the $z$-coordinates (so long as they are bounded by $\rho r$ in absolute value), we get a domain as in Figure \ref{fig:A model domain in 3D}, which will have a neck where the principal eigenfunction must become very small.
     \item From this, we can repeat the proof as in the negatively pinched case until the final step, where we apply the continuity argument. It is not possible to use sliding domains as not all of the sliding domains will have a neck. Instead, we make use of the parameter $\alpha$ to find another family of domains which all have necks and where the neck moves from one side of the domain to the other.
\end{enumerate}

\subsection{Negatively pinched metrics}
\label{Higher dimensional case}

When the sectional curvature is negatively pinched, this strategy works to build domains whose fundamental gap is arbitrarily small. In this case, there is no need to bound $\rho$ or $\theta$, since the Jacobi field analysis is simpler and shows that all the Jacobi fields along the geodesic bend outward. And in the continuity argument, there is no need to take $\alpha$ very close to $1$. As a result, we can eliminate all of the restrictions on $L$ and so construct convex domains whose diameter is arbitrary (up to the diameter of the manifold) and whose fundamental gap is arbitrarily small.

\begin{figure}[htbp]
    \centering
    \includegraphics[width=.9\linewidth]{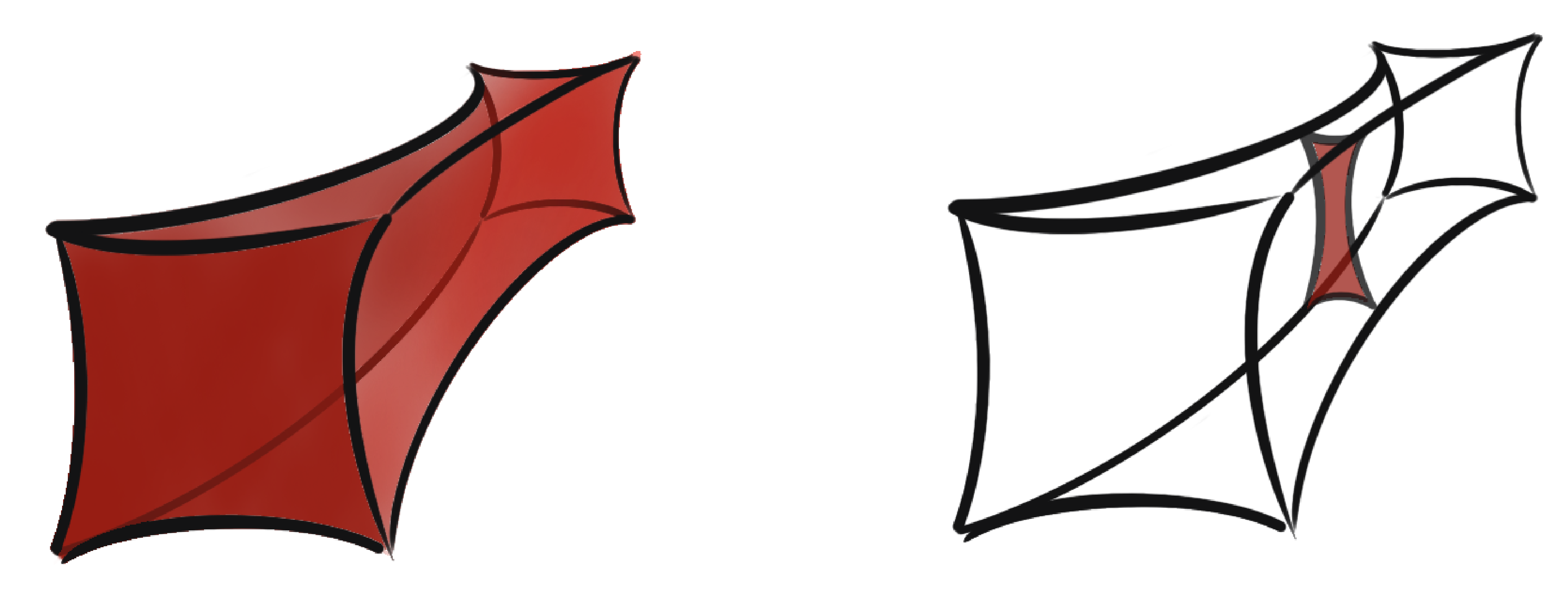}
    \caption{A domain in negatively curved three-dimensional space and the slices used to define $V(x)$}
    \label{fig:Negative curvature}
\end{figure}

\subsection{Determining the depth-to-height ratio $\rho$.}
In order to establish the flattening phenomenon, we will need the eigenvalue of the cross-slices to achieve their maximum in the middle of the domain. To do so, we pretend that the curvature tensor is constant along $\gamma(x)$ and allow the $y$-coordinate of the Jacobi fields to expand according to $\cosh(\kappa_2(0) x)$ and the $z$-coordinate of the Jacobi fields to contract according to $\cos(\kappa_3(0) x)$. For each $x$, the principal Dirichlet eigenvalue of the $x$-slice would be $\frac{\pi^2}{\cosh^2(\kappa_2(0) x)} + \frac{\pi^2}{\rho^2 \cos^2(\kappa_3(0) x)}$. Because the sectional curvatures change along $\gamma$, we need to allow some wiggle room so choose $\rho$ to be large enough to make the following function concave from $-L_0$ to $L_0$:
        \begin{equation} \label{Assumption on rho}
         \frac{\pi^2}{\cosh^2(K_2 x)} + 4 \frac{\pi^2}{\rho^2 \cos^2(K x)}.
        \end{equation} 
    This is one of the main points of the argument in two dimensions: in order to establish estimate for the second derivative in Lemma \ref{lem:second-derivative-V}, it was only necessary to prove that the eigenvalue of the cross-slices became large in the neck, not  that the height of the cross-slices was small. 
    
    Note that by a straightforward computation, at $x=0$ we have the identity
    \begin{equation*} \frac{d^2}{dx^2} \left( \frac{\pi^2}{\cosh^2(K_2 x)} + 4 \frac{\pi^2}{\rho^2 \cos^2(K x)}\right ) = \pi^2 \left( -2 K_2 + \frac{8}{\rho^2} K^2 \right),
    \end{equation*}
    which implies that 
    \begin{equation} \label{Lower bound on rho}
    \rho > 2 \frac{K}{|K_2|} >  2 \frac{K}{|\kappa_2(0)|} \geq 2.
    \end{equation}
We will use the fact that $\rho \geq 2$ throughout to simplify the argument.

\subsection{Length restrictions: Part I}
\label{ssec:geodesic-flattening}

Now that we have chosen $\rho$, we can choose a suitable length for the geodesic. 
First, we require that $L$ is small enough so that any solution to the boundary value problem
\begin{equation} \label{Conjugacy assumption}
    \ddot j = - K j \quad \quad j(-L) = j(L) = 1
\end{equation}
satisfies $1<j(t)<2$ for all $t \in (-L,L)$ where $K$, as before, is a bound on the sectional curvature in the tube domain. In other words, we assume that  $ \cos(K L)> \frac{1}{2}$. Intuitively, this assumption gives a qualitative way to say that the length is small relative to the conjugacy radius along the geodesic.

The second restriction on $L$ has been mentioned in \ref{ssec:strategy} and ensures that the rotation angle $\theta$ is small so the Jacobi fields can be approximated by ones for which $\theta \equiv 0$.   The key point here is that when the equations decouple (i.e. when $\theta\equiv 0$), the height of the Jacobi field evolves independently of its depth. 

\subsection{The Jacobi fields}
\label{ssec:JacobiFields}
In general, the Jacobi field equations do \emph{not} decouple, and instead we need to estimate the quantity
$
    \left \langle \frac{d^2}{d x^2} J, \frac{\partial}{\partial y} \right \rangle = \frac{d^2}{d x^2} J^y. 
$ 
  The equality is because $\frac{\pd}{\pd y}$ is parallel along $\gamma$ and we have a similarly property for the $z$-coordinate of the Jacobi field.

We use the conventions
\begin{align*}
\xi_2 &=  \cos(\theta) \frac{\partial}{\partial y} + \sin(\theta) \frac{\partial}{\partial z} \\
\xi_3 &=  -\sin(\theta) \frac{\partial}{\partial y} + \cos(\theta) \frac{\partial}{\partial z}.
\end{align*}
The angle $\theta$ depends on $x$, is zero at $x=0$, and is a differentiable function. 
Then, for a given $x$-value, we have that 
\begin{align*}
     \frac{d^2}{d x^2} J & =  - \kappa_2(x) \, \textrm{proj}_{\xi_2} J - \kappa_3(x) \, \textrm{proj}_{\xi_3} J  
      =  - \kappa_2(x) (J \cdot \xi_2) \xi_2 - \kappa_3(x) (J \cdot \xi_3) \xi_3 \\
     & =  \left( - \kappa_2(x) \cos(\theta) J^y - \kappa_3(x) \sin(\theta) J^z \right) \left( \cos(\theta) \frac{\partial}{\partial y} + \sin(\theta) \frac{\partial}{\partial z} \right) \\
     & \ \  + \left( \kappa_3(x) \sin(\theta) J^y - \kappa_3(x) \cos(\theta) J^z \right)  \left( -\sin(\theta) \frac{\partial}{\partial y} + \cos(\theta) \frac{\partial}{\partial z} \right) 
\end{align*} 
Collecting terms in the $\frac{\partial}{\partial y}$ direction and in the $\frac{\pd}{\pd z}$ direction, we have that
\begin{align}
\label{eq:Jy}
  \frac{d^2 }{dx^2}J^y  + (\kappa_2(x) \cos^2(\theta) + \kappa_3(x) \sin^2(\theta)) J^y &= - (\kappa_2(x)+\kappa_3(x)) \frac{\sin(2 \theta)}{2} J^z\\
\notag 
  \frac{d^2 }{dx^2}J^z   +( \kappa_2(x) \sin^2(\theta)  + \kappa_3(x) \cos^2(\theta)) J^z &= (\kappa_3(x)-\kappa_2(x)) \frac{\sin(2 \theta)}{2} J^y.
\end{align} 
From this, we see that we want to pick $L$ small enough so that whenever $|J^y|> \frac{1}{2}$ and $|J^z|< A$ for some constant $A$, we have that the coefficient $-\kappa_2(x) \cos^2(\theta)$ in equation \eqref{eq:Jy} dominates.  This implies several conditions on $L$, which are listed in Proposition \ref{prop:barriers-for-JF}.

%


\subsection{Geodesic flattening in the presence of negative curvature}

Now we consider points $p = \gamma(a)$, $q = \gamma(b)$ with $a, b \in (-L, L)$ and two vectors $V_p \in T_{p} M$ and $V_q \in T_{q} M$ which are both perpendicular to $\dot \gamma$. To give some intuition for why to consider these quantities, these vectors are scaled logarithms for points in the convex domain we will construct. Furthermore, the Jacobi field connecting the vectors approximate the geodesic between these points. For now, let us derive the relevant estimates and then apply them to building the convex domain.

\begin{proposition}[Flattening of Jacobi fields]
\label{prop:barriers-for-JF}
We suppose that $\rho>2$ is a given constant and the vectors $V_p$ and $V_q$ satisfy the estimates
$V_p^y, V_q^y > 3/4$ and $|V_p|< \rho \cos(K a) $ and $|V_q| <  \rho \cos(K b) $. 
If $L$ satisfies 
  \begin{enumerate}[label=(\roman{enumi})]
      \item \label{item:cos-bound}$\cos (KL)>\frac{1}{2}$, 
      \item \label{item:theta-bound} $\sin(\theta(x)) < \frac{|\kappa_2(0)|}{16 K \rho}$, for $x \in [-L,L]$, and 
      \item \label{item:coshL} $\cosh\left(\sqrt{\frac{-3 \kappa_2(0)}{2}} L \right) \leq \frac{3}{2}$, 
  \end{enumerate}
the Jacobi field $J(x)$ connecting $V_p$ and $V_q$ satisfies the estimate 
\begin{equation*}
    j_*(x) <  J^y(x) < j^*(x), \quad x \in (a,b)
\end{equation*}
where $j^*(x)$ is a function satisfying
\begin{equation*}
    \frac{d^2}{dx^2} j^* = -\frac{\kappa_2(0)}{2} j^* \quad j^*(a) = V_p^y, j^*(b) = V_q^y. 
\end{equation*} 
and $j_*(x)$ is a function satisfying
\begin{equation*}
    \frac{d^2}{dx^2} j_* = -\frac{3 \kappa_2(0)}{2} j_* \quad j_*(a) = V_p^y, j_*(b) = V_q^y. 
\end{equation*} 
\end{proposition}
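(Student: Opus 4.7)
My plan is to view \eqref{eq:Jy} as the scalar equation $(J^y)''=\tilde c(x)J^y$, where
\[
\tilde c(x)=-\bigl(\kappa_2(x)\cos^2\theta+\kappa_3(x)\sin^2\theta\bigr)-(\kappa_2+\kappa_3)\frac{\sin 2\theta}{2}\cdot\frac{J^z(x)}{J^y(x)},
\]
and to show that the three hypotheses force $\tilde c(x)\in\bigl[|\kappa_2(0)|/2,\,3|\kappa_2(0)|/2\bigr]$ throughout $[a,b]$. Once this coefficient bracket is secured, the conclusion $j_*<J^y<j^*$ will follow from the one-dimensional maximum principle applied to the differences $u^+=j^*-J^y$ and $u^-=J^y-j_*$.

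First I would verify that $j_*\geq 1/2$ and $j^*\geq j_*$ on $[a,b]$. Since $\kappa_2(0)<0$, the function $j_*$ solves $j_*''=\omega^2 j_*$ with $\omega=\sqrt{-3\kappa_2(0)/2}$; writing $j_*(x)=A\cosh(\omega(x-x^*))$ at its minimum $x^*$ and observing that one of $x^*-a$, $b-x^*$ must be at most $(b-a)/2\leq L$, hypothesis \ref{item:coshL} gives $A\geq(3/4)/\cosh(\omega L)\geq 1/2$. The analogous calculation for $j^*$ (whose ODE has smaller coefficient) shows $j^*\geq j_*\geq 1/2$. Convexity of both functions also gives $j^*(x)\leq\max(V_p^y,V_q^y)\leq\rho$.

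Next I would run a bootstrap on
\[
T=\sup\bigl\{t\in[a,b]:J^y(x)\in[j_*(x),j^*(x)]\text{ and }|J^z(x)|\leq C\rho\text{ on }[a,t]\bigr\},
\]
where $C$ is an explicit constant coming from hypothesis \ref{item:cos-bound} and Jacobi comparison applied to the second equation of \eqref{eq:Jy}. Continuity of the Jacobi flow and the boundary data force $T>a$. On $[a,T]$ the bootstrap gives $J^y\geq 1/2$ and $|J^z|/J^y\leq 2C\rho$; combining with $|\kappa_2|,|\kappa_3|\leq K$ and hypothesis \ref{item:theta-bound} yields
\[
|\tilde c(x)-(-\kappa_2(x))|\leq 2K\sin^2\theta+2K\sin\theta\cdot\frac{|J^z|}{J^y}\leq\tfrac{3}{8}|\kappa_2(0)|,
\]
and the pinching \eqref{Curvature pinching along geodesic} places $-\kappa_2(x)$ within $\tfrac{1}{8}|\kappa_2(0)|$ of $|\kappa_2(0)|$, so altogether $\tilde c(x)\in\bigl[|\kappa_2(0)|/2,\,3|\kappa_2(0)|/2\bigr]$. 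A direct computation gives
\[
(u^+)''=-\tfrac{\kappa_2(0)}{2}u^+-\bigl(\tfrac{\kappa_2(0)}{2}+\tilde c\bigr)J^y,\qquad (u^-)''=\tilde c\,u^-+\bigl(\tilde c+\tfrac{3\kappa_2(0)}{2}\bigr)j_*,
\]
so the bracket on $\tilde c$ makes both forcing terms nonpositive, leaving each $u^\pm$ as a solution of $w''\leq c(x)\,w$ with $c(x)\geq|\kappa_2(0)|/2>0$ and $w(a)=w(b)=0$. The one-dimensional maximum principle then forces $u^\pm\geq 0$ on $[a,T]$. A parallel application of Jacobi comparison to the $J^z$-equation (whose forcing is controlled by \ref{item:theta-bound} and $J^y\leq j^*\leq\rho$) shows that $|J^z|\leq C\rho$ is also preserved on $[a,T]$, so $T=b$.

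The main obstacle is coordinating the two bootstrap conditions in the case $\kappa_3(0)<0$: the $J^z$-equation then has exponentially growing homogeneous solutions, and propagating the bound $|J^z|\leq C\rho$ requires the full strength of hypothesis \ref{item:cos-bound}. The numerical constants are also tight because the pinching \eqref{Curvature pinching along geodesic} leaves only a $(3/8)|\kappa_2(0)|$ margin around $-\kappa_2(0)$ on each side, and hypothesis \ref{item:theta-bound} is essentially calibrated to exhaust that margin. Once the coefficient bracket is secured, the remainder is a standard Sturm--Liouville comparison.
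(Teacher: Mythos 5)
Your high-level idea — convert \eqref{eq:Jy} into a comparison problem and sandwich $J^y$ between explicit barrier ODEs — matches the paper's, and your arithmetic for the coefficient bracket is correct. The difference, and the gap, is in how you handle the inhomogeneous term. You absorb $-(\kappa_2+\kappa_3)\frac{\sin 2\theta}{2}J^z$ into an effective coefficient $\tilde c$ by dividing through by $J^y$, so that $\tilde c$ contains the ratio $J^z/J^y$. Controlling $\tilde c$ then needs $J^y\ge 1/2$, which is essentially $J^y\ge j_*$, i.e.\ half of the statement you are trying to prove. You propose to break the circularity with a bootstrap, but as written the bootstrap does not close. First, at $x=a$ all three functions $J^y,j_*,j^*$ coincide, so strict inequality on $(a,a+\epsilon)$ is not automatic; whether $J^y$ stays between the barriers just past $a$ depends on derivative data you have not compared, so you cannot assert $T>a$. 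Second, the maximum-principle step for $u^{\pm}$ requires $u^{\pm}(a)=u^{\pm}(b)=0$, but the bootstrap only supplies the bracket on $[a,T]$ for a priori $T<b$, where $u^{\pm}(T)=0$ need not hold (at most one of the three bootstrap conditions is saturated at $T$, and it may be the $|J^z|$ bound).

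The paper avoids this circularity in two ways. It bounds $|J^z|$ \emph{unconditionally}, not via the coupled $J^z$-equation, but by estimating $|J(x)|$: decomposing $R(J,\dot\gamma)\dot\gamma$ into components parallel and perpendicular to $J$ shows the perpendicular term drops out of the evolution of $|J|$, so Lemma \ref{Concave envelopes} gives $|J(x)|\le\rho\cos(Kx)$ with no input about $J^y$. It then keeps the forcing term as a separate inhomogeneity, bounded by $|\kappa_2(0)|/16$ using this $J^z$-bound, and checks that $j_*$ is a subsolution and $j^*$ a supersolution: the crucial observation is that $j_*\ge 1/2$ is a property of the explicit ODE for $j_*$ alone (condition~\ref{item:coshL}), so $c(x)j_*\ge|\kappa_2(0)|/8$ dominates the forcing without any reference to $J^y$. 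A single comparison then finishes. To repair your argument, drop the division by $J^y$, establish the $|J^z|$ bound via $|J|$ as above, and recast your coefficient estimate as a bound on $c(x)$ applied to the barriers; the pieces you already have then assemble without a bootstrap.
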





We first recall a comparison lemma which will be used several times in the remainder of the argument.

%
%


\begin{lemma} \label{Concave envelopes}
Suppose $\varphi$ is a real solution on $(a,b)$ of 
  \[
\ddot{\varphi} = -g_1(x) \varphi 
 \]
and $\psi$ a real solution on $(a,b)$ of 
  \[
  \ddot{\psi} = -K \psi.
  \]
  Let $K > g_1(x)>0$ on $(a,b)$. If $\psi(a)= \varphi(a)>0$ and $\psi(b) = \varphi(b) >0$ and $b-a < \frac{\pi}{K}$
  then $\varphi (x) < \psi(x)$ on $(a,b)$.
\end{lemma}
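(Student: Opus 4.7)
The plan is to combine a Sturm-type separation argument with a Wronskian identity. The first step is to establish that both $\psi$ and $\varphi$ are strictly positive on the open interval $(a,b)$. For $\psi$, all solutions of $\ddot{\psi}+K\psi=0$ have consecutive zeros separated by the same fixed amount, and the length hypothesis on $b-a$ is exactly what is needed to ensure $\psi$ has at most one zero in $[a,b]$; together with $\psi(a),\psi(b)>0$ this forces $\psi>0$ throughout $(a,b)$. Then, since $0<g_1(x)<K$, the standard Sturm comparison theorem implies that between any two consecutive zeros of $\varphi$ there must lie a zero of $\psi$; as $\psi$ has none in $(a,b)$, $\varphi$ has at most one interior zero, and since $\varphi(a),\varphi(b)>0$ the parity of sign changes forces $\varphi>0$ throughout $(a,b)$ as well.

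With positivity secured, I would introduce the Wronskian $W=\varphi\dot{\psi}-\psi\dot{\varphi}$. Differentiating and substituting the two ODEs gives $W'=\varphi\ddot{\psi}-\psi\ddot{\varphi}=(g_1(x)-K)\,\varphi\psi<0$ on $(a,b)$, so $W$ is strictly decreasing there.

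Now I would consider the quotient $q=\psi/\varphi$, which is $C^1$ on $[a,b]$ with $q(a)=q(b)=1$ and $\dot q=W/\varphi^2$. By Rolle's theorem $\dot q$ must vanish at some interior point, so $W$ has a zero; strict monotonicity of $W$ then yields exactly one such zero $x_0$, with $\dot q>0$ on $(a,x_0)$ and $\dot q<0$ on $(x_0,b)$. Hence $q$ is strictly increasing then strictly decreasing on $(a,b)$, and since $q=1$ at both endpoints, $q(x)>1$ for all $x\in(a,b)$, which is exactly the desired inequality $\varphi(x)<\psi(x)$.

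The only delicate step I anticipate is establishing the positivity of $\varphi$: one might be tempted to set $h=\psi-\varphi$ and apply a maximum principle to $\ddot h+Kh=(g_1-K)\varphi$, but the positive zeroth-order coefficient $+K$ obstructs the direct use of the minimum principle, so some genuine input (Sturm separation, or equivalently positivity of the Green's function of $-d^2/dx^2-K$ under the length hypothesis) is needed. Once positivity of both $\varphi$ and $\psi$ is in hand, the Wronskian/quotient calculation is entirely routine and avoids any explicit Green's function computation.
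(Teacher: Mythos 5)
Your proof is correct and its overall structure matches the paper's: establish strict positivity of $\psi$ from the length bound, establish strict positivity of $\varphi$ via Sturm comparison, and then analyze the ratio $q=\psi/\varphi$ with equal boundary values $1$. Where you diverge is in the final step. The paper differentiates $q$ twice, obtaining an ODE of the form $\ddot q = (g_1-K)q - \dot q\,\bigl(2\dot\varphi/\varphi\bigr)$, and invokes a minimum-principle argument: since the zeroth-order coefficient is strictly negative, $q$ cannot attain a positive interior minimum, so $q\geq 1$ and then $q>1$ on $(a,b)$. You work one derivative lower: from $\dot q = W/\varphi^2$ with $W=\varphi\dot\psi-\psi\dot\varphi$ and $W'=(g_1-K)\varphi\psi<0$, Rolle plus strict monotonicity of $W$ shows $\dot q$ changes sign exactly once, giving the explicit unimodal profile of $q$ and hence $q>1$ interior. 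The two closings are essentially two presentations of the same mechanism (the paper's $\ddot q$ identity is nothing but the derivative of your $\dot q = W/\varphi^2$), but your version is slightly more elementary and makes the shape of $q$ transparent rather than appealing to a contradiction at an interior minimum. Your cautionary remark about the naive maximum principle on $\psi-\varphi$ is also well placed; the paper avoids it precisely by first proving positivity of both solutions before forming the quotient.
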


\begin{proof}
By solving the ODE for $\psi$ explicitly, we find that the bound on $b-a$ ensures that there is a  solution $\psi$ that is positive on $(a,b)$.  By the Sturm comparison theorem, we see that $\varphi$ must also be strictly positive on $(a,b)$ (as there must be a root of $\psi$ between consecutive roots of $\varphi$). Since both are strictly positive, the function $\frac{\psi}{\varphi}$ is defined. It satisfies the equation
    \begin{align*}
    \ddot{\left( \frac{\psi}{\varphi}\right)} &= \frac{(\varphi\ddot{\psi} - \psi \ddot{\varphi})}{\varphi^2} - \frac{(\varphi\dot{\psi} - \psi\dot{\varphi}) 2 \dot{\varphi} }{\varphi^3} \\
    & = (g_2 - g_1) \frac{\psi}{\varphi} - \dot{\left(\frac{\psi}{\varphi}\right)} \frac{2 \dot \varphi}{\varphi^2}.
    \end{align*} 
Thus $\frac{\psi}{\varphi}$ can not achieve a positive minimum in $(a,b)$. This implies the result.  \end{proof}

\begin{proof}[Proof of Proposition \ref{prop:barriers-for-JF}]

 This will take two steps. Firstly, we will show that $J^z$ does not become too large. Then we will show that $j_{*}$ is a subsolution for \eqref{eq:Jy}. Similarly, $j^*$ is a supersolution and combining these observations gives the result.  
 
{\bf Bounding $|J^z|$.} Instead of dealing with $J^z$, we work with $|J(x)|$, which will automatically give us the estimate we want and will be easier to handle.\footnote{This estimate could also be done by comparing the space to one of very positive curvature using the Rauch comparison theorem.}  Let us consider the Jacobi field equations, which are
\begin{equation*}
\frac{d^{2}}{d x^{2}} J(x)+R(J(x), \dot{\gamma}(x)) \dot{\gamma}(x)=0.
\end{equation*}
We decompose the second term into the component which is parallel to $J(x)$ and the component which is perpendicular to $J(x)$ to get  
\begin{equation*}
\frac{d^{2}}{d x^{2}} J(x)+\kappa(J(x), \dot{\gamma}(x)) J(x) + (\mathcal{R} \ast J) J^\perp(x) =0,
\end{equation*}
where the second term is an algebraic combination of curvatures $\mathcal{R}$ and components of the Jacobi field $J(x)$ and $J^\perp(x)$ is a unit vector which is perpendicular to $J(x)$ (and will depend on $x$ in general).

Computing the evolution of $|J(x)|$, we see the term induced by $J^\perp(x)$ plays no role, so we have the estimate
$ \frac{d^{2}}{d x^{2}}  |J(x)| = - \kappa(J(x), \dot{\gamma}(x)) |J(x)|. $
We can then apply Lemma \ref{Concave envelopes} to bound the size of $|J(x)|$, and our assumption on $L$ ensures that $|J(x)|$ (and hence $|J^z(x)|$) has size at most $\rho \cos(K x)$.

{\bf The function $j_*$ is a lower barrier.}
Let us consider $j_{**}$, the solution to $\frac{d^2}{dx^2} j_{**} = - \frac{3 \kappa_2(0)}{2} j_{**}$ with boundary condition $j_{**}(a) = j_{**}(b) = 3/4$. It can be written explicitly and condition \ref{item:coshL} implies $j_{**}(x) \geq 1/2$ for every $x \in (a, b)$. Therefore $j_{*}(x) \geq 1/2$. 

Applying the left-hand operator of \eqref{eq:Jy} to $j_*$, we have 
  \begin{multline*}
   \frac{d^2 }{dx^2}j_*  + (\kappa_2(x) \cos^2(\theta) + \kappa_3(x) \sin^2(\theta)) j_*\\
   =\left(- \frac{3 \kappa_2(0)}{2} +  \kappa_2(x) \cos^2(\theta) + \kappa_3(x) \sin^2(\theta)\right) j_* =: c(x) j_*
  \end{multline*}
and the coefficient $c(x)$ in front of $j_*$ can be bounded from below. Indeed,  the bounds \eqref{Curvature pinching along geodesic} and condition \ref{item:theta-bound} give
  \begin{align*}
      c(x)
      & \geq \frac{3 |\kappa_2(0)|}{2} - |\kappa_2(x)| - K \sin^2(\theta) \\
      &
      \geq \frac{3 |\kappa_2(0)|}{2} - \frac{9 |\kappa_2(0)|}{8} - K \frac{|\kappa_2(0)|^2}{32^2 K^2 \rho^2 }\\
      & \geq \frac{3 |\kappa_2(0)|}{8} - \frac{|\kappa_2(0)|}{32^2 \rho^2}
       \geq \frac{|\kappa_2(0)|}{4},
  \end{align*}
  where we used \eqref{Lower bound on rho} in the last inequality. The nonhomogeneous term of \eqref{eq:Jy} is controlled using the fact that $|J^z| \leq \rho$
    \begin{equation}
    \label{eq:nonhomogeneous}
        \left|(\kappa_2(x) + \kappa_3(x)) \frac{\sin(2\theta)}{2} J^z \right|\leq 2 K |\sin(\theta)| \rho \leq \frac{|\kappa_2(0)| }{16}. 
    \end{equation}
  Because $j_* \geq 1/2$, it is a subsolution of \eqref{eq:Jy}. Therefore $j_{\ast} (x) \leq J^y (x)$ for every $x\in (a,b)$. 
 
  {\bf The function $j^*$ is an upper barrier.} Similarly, one can show that $ \frac{d^2 }{dx^2}j^*  + (\kappa_2(x) \cos^2(\theta) + \kappa_3(x) \sin^2(\theta)) j^* \leq - \frac{|\kappa_2(0)|}{4}$. Thanks to \eqref{eq:nonhomogeneous} and the observation that $j^*(x) \geq j_*(x) \geq 1/2$, we have that $j^*$ is a supersolution of \eqref{eq:Jy}. This concludes the proof. 
  \end{proof}

\subsection{Length restrictions: Part II}
\label{2 length 2 restrictions}

Now that we have established the upper and lower barriers $j^*(x)$ and $j_*(x)$, respectively, we must impose one more condition on the length, which will play a role at the very end when we apply the continuity argument. For reasons that will become clear in Subsection \ref{A new continuity argument}, we want the upper barrier to be fairly shallow, which will allow a small perturbation of the endpoints to change the upper barrier from increasing to decreasing on the interval $[-L,L]$. To make this precise, we impose a final length restriction on $L$
\begin{equation} \label{Upper convex envelope length restriction}
    \sinh^2 \left( \sqrt{\frac{-\kappa_2(0)}{2} } L\right)  < \frac{1}{20}
\end{equation}  

\subsection{Building the domain}

 The key to making the construction of the domain work is that the barriers are \emph{uniform} in the depth of $V_p$ and $V_q$ so long as they are both shallower than $2 \rho$.
 
We now consider 8 points arranged in $M$ whose $(x,y,z)$ coordinates are 
\begin{equation*} \label{Definition of alpha}
    \left ( -L, \pm \alpha  r, \pm \rho r \right ) \qquad   \left ( L, \pm r, \pm \rho r \right ).
\end{equation*}
 These points form the vertices of a parallelipiped in the Fermi coordinates, and we consider the convex hull of these points. We call this domain $\Omega_{\alpha,r}$. For $r$ small enough and $\alpha$ close enough to $1$, we want to show that this domain has a neck. At the end of the proof, we will specify a value for $\alpha$, which will replace the role of $t$ in the original continuity argument. For now, we will only insist that $\alpha \in \left( \frac{10}{11},\frac{11}{10} \right).$

Let us now consider the height of the domain, which is defined to be
\begin{equation*}
   \texttt{H}(x,z) = \sup_{y} \left\{ (x,y,z) \in \Omega_r \right \}.
\end{equation*}

In other words, the height is the maximal $y$ value for a fixed $x$ and $z$ value.
We call the collection of points which attain the height the ``top" of the domain. In two dimension, the height was achieved by a geodesic and we could use the Rauch comparison theorem to control the geometry of the domain. However, in higher dimensions the top can be much more complicated and in general is not smooth.
The estimates on the Jacobi fields were obtained uniformly in the $z$ coordinates, so they hold no matter which piece of geodesic realizes the top of the domain.

\begin{figure}
    \centering
    \includegraphics[width=.9\linewidth]{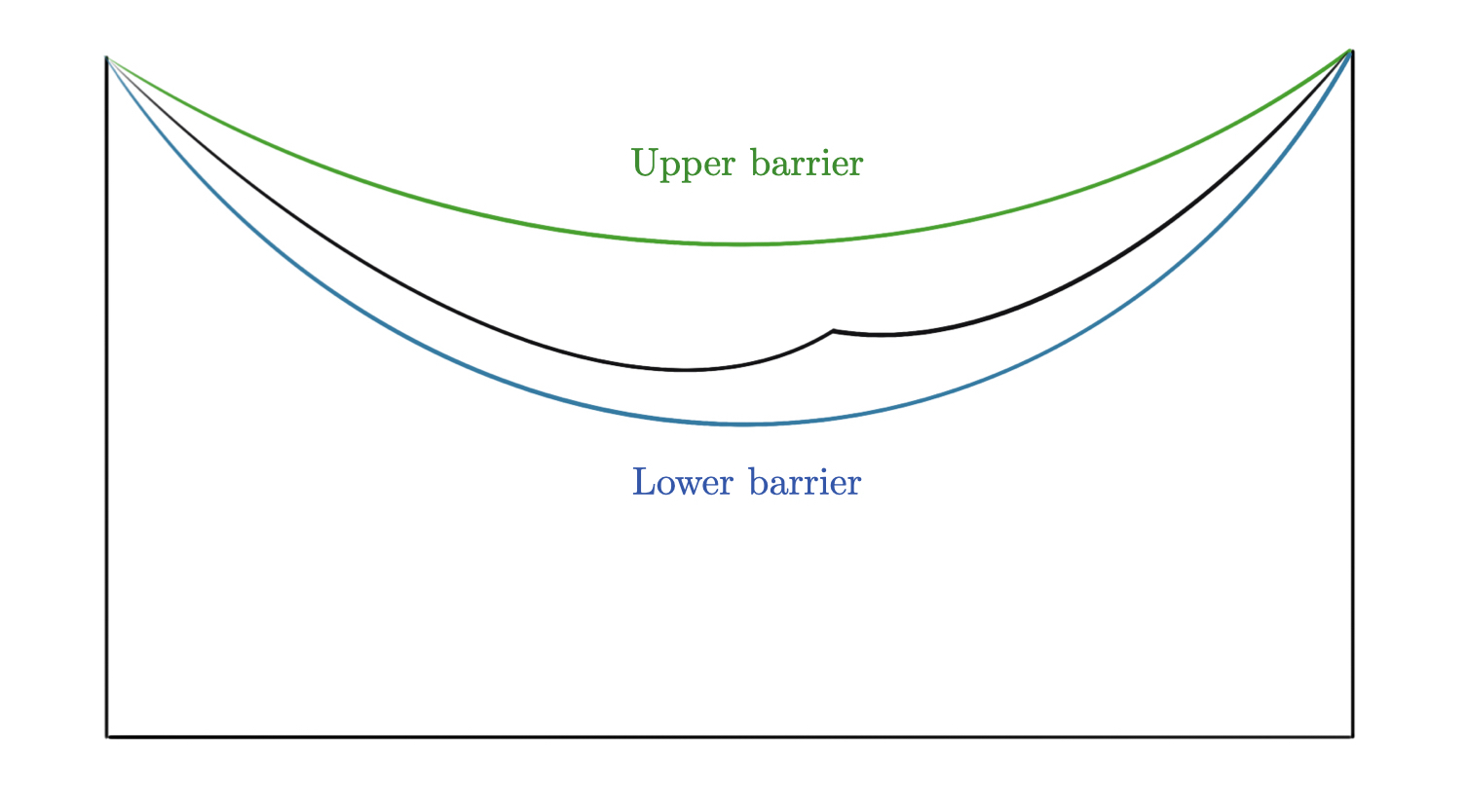}
    \caption{The height of the domain\protect \footnotemark as a function of $x$ (for a fixed $z$ value) and the barriers bounding it}
    \label{fig:Convex envelopes}
\end{figure}

\footnotetext{In this figure, we have purposely drawn the height to be neither smooth or convex. Using more careful analysis, it is possible to show that the Hessian of the height function can be bounded from below using a lower bound on the sectional curvature, but in general we can expect it to have corners.}

From here, it is possible to replicate the rest of the argument until the final step involving the continuity argument, which requires using $\alpha$ instead of $t$.

\subsection{A new continuity argument}

\label{A new continuity argument}


In the case of mixed curvature, we cannot use our original continuity argument using sliding domains. As shown in Figure \ref{fig:Convex envelopes}, the height of the domain need not be convex in $x$. Therefore, if there is a gap between the upper and lower barriers, it is possible that when we try to slide the domain, for intermediate values of $t$ the domain $\Omega_{r,t}$ might achieve its maximal height near $\{x=0\}$, ruining the neck effect.

To get around this issue, we consider a different family of domains, which are parametrized by $\alpha.$ Changing $\alpha$ acts to change the height at one side of the domain while leaving the height at the other side fixed. The key improvement on this family compared to the sliding domains is that the convex upper and lower barriers are equal to each other at the endpoints for all $\alpha$. Therefore, even when the height has a local maximum in the interior of $\Omega$, this value cannot exceed the height at one or both of the endpoints.

In order to make this precise, let us state a brief lemma which follows from the properties of second order ODEs.

\begin{lemma} \label{Lipschitzness of the envelopes}
For all $\alpha \in \left(\frac{10}{11},\frac{11}{10} \right)$ and $J^z$ values with $|J^z|< 2 \rho$, both the upper and lower barriers are uniformly Lipschitz in $x$ with the constant only depending on the height at the endpoints and the bound on the curvature $K$.
\end{lemma}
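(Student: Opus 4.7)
The plan is to reduce the statement to the explicit solution of a linear second-order ODE with constant coefficients. Recall that $j^*$ and $j_*$ each solve an equation of the form $\ddot\varphi = \mu^2 \varphi$ on $[a,b] \subseteq [-L,L]$, with $\mu^2$ equal to $-\kappa_2(0)/2$ or $-3\kappa_2(0)/2$, respectively. First I would write the general solution as $\varphi(x) = A\cosh(\mu x) + B\sinh(\mu x)$, so the problem becomes one of controlling $A$ and $B$ uniformly over the family of boundary data.

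Next I would determine $A$ and $B$ from the two-point boundary conditions $\varphi(a) = V_p^y$, $\varphi(b) = V_q^y$. This is a $2\times 2$ linear system whose determinant equals $\sinh(\mu(b-a))$. Since $b-a$ is comparable to $L$ on the family of domains $\Omega_{\alpha,r}$ under consideration and $\mu$ is bounded below by a positive constant coming from the pinching \eqref{Curvature pinching along geodesic}, this determinant is bounded away from zero by a constant depending only on $L$ and $K$. Cramer's rule then yields $|A|, |B| \leq C(L,K)(|V_p^y| + |V_q^y|)$.

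Finally I would differentiate to get $\dot\varphi(x) = A\mu\sinh(\mu x) + B\mu\cosh(\mu x)$ and bound its sup norm on $[-L,L]$ by $\mu\cosh(\mu L)(|A|+|B|)$. Together with the previous step this produces a Lipschitz constant of exactly the form claimed, depending only on the endpoint heights $V_p^y, V_q^y$ and on $K$ (with $L$ already fixed in terms of $K$ by the restrictions in Subsections \ref{ssec:geodesic-flattening} and \ref{2 length 2 restrictions}). The restriction $\alpha \in (10/11,\,11/10)$ enters only through the uniform upper bound it imposes on the endpoint heights, and the hypothesis $|J^z| < 2\rho$ enters only insofar as it guarantees that Proposition \ref{prop:barriers-for-JF} applies so that $j^*, j_*$ are in fact barriers for $J^y$. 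The same computation handles $j_*$ verbatim upon replacing the value of $\mu$.

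There is essentially no substantive obstacle here; the argument is a routine application of elementary ODE theory. The only point requiring minor care is checking that the determinant $\sinh(\mu(b-a))$ does not degenerate as the endpoints $(a,b)$ vary over the configurations realized by the polyhedra $\Omega_{\alpha,r}$, which follows from the uniform lower bound on $b-a$ determined by the admissible range of $\alpha$ together with the a priori lower bound on $\mu$.
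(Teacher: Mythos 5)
Your argument is correct and is precisely the routine ODE computation the paper alludes to when it says the lemma ``follows from the properties of second order ODEs'' but otherwise leaves unproved: write the barrier in the basis $\{\cosh(\mu x),\sinh(\mu x)\}$, solve the $2\times 2$ boundary-value system with Wronskian-determinant $\sinh(\mu(b-a))$, and bound the derivative. One small simplification you could make: for the family $\Omega_{\alpha,r}$ the endpoints in $x$ are always $a=-L$, $b=L$, so $b-a=2L$ is fixed and there is no degeneration issue to check; your caution at the end is harmless but unnecessary.
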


We define the ``neck" of the domain to be centered at the $x$-value where the convex upper barrier attains its minimum.  Unlike in the previous case, the neck need not contain $x= 0$, and will in fact move from $x=-L$ to $x=L$ as $\alpha$ increases. We call the minimum value of the convex upper barrier the ``neck bound," and denote it by $\mathcal{B}$. We then define the neck to be the set where the convex upper barrier is less than $\mathcal{B}+\frac{1-\mathcal{B}}{4}$ or $\mathcal{B} +\frac{\alpha-\mathcal{B}}{4}$, whichever is larger. Because the upper barrier is convex, the neck is a single connected set and its width is bounded from below by Lemma \ref{Lipschitzness of the envelopes}. The particular $x$-value where the height is minimized depends on the $z$-coordinate, but since $z$ is bounded by $2 \rho r$, and the convex upper barriers are uniformly convex, changing $z$ will affect the argmin of $\texttt{H}( \cdot ,z)$ by $O(r)$ at most. For $r$ sufficiently small, this allows us to define the location and height of the neck consistently up to $O(r)$ (which can be discarded in the analysis).

\begin{figure}[htbp]
\begin{tikzpicture}[scale=\MyScale]
\node at (-100,0)     {\includegraphics[width=.9\linewidth]{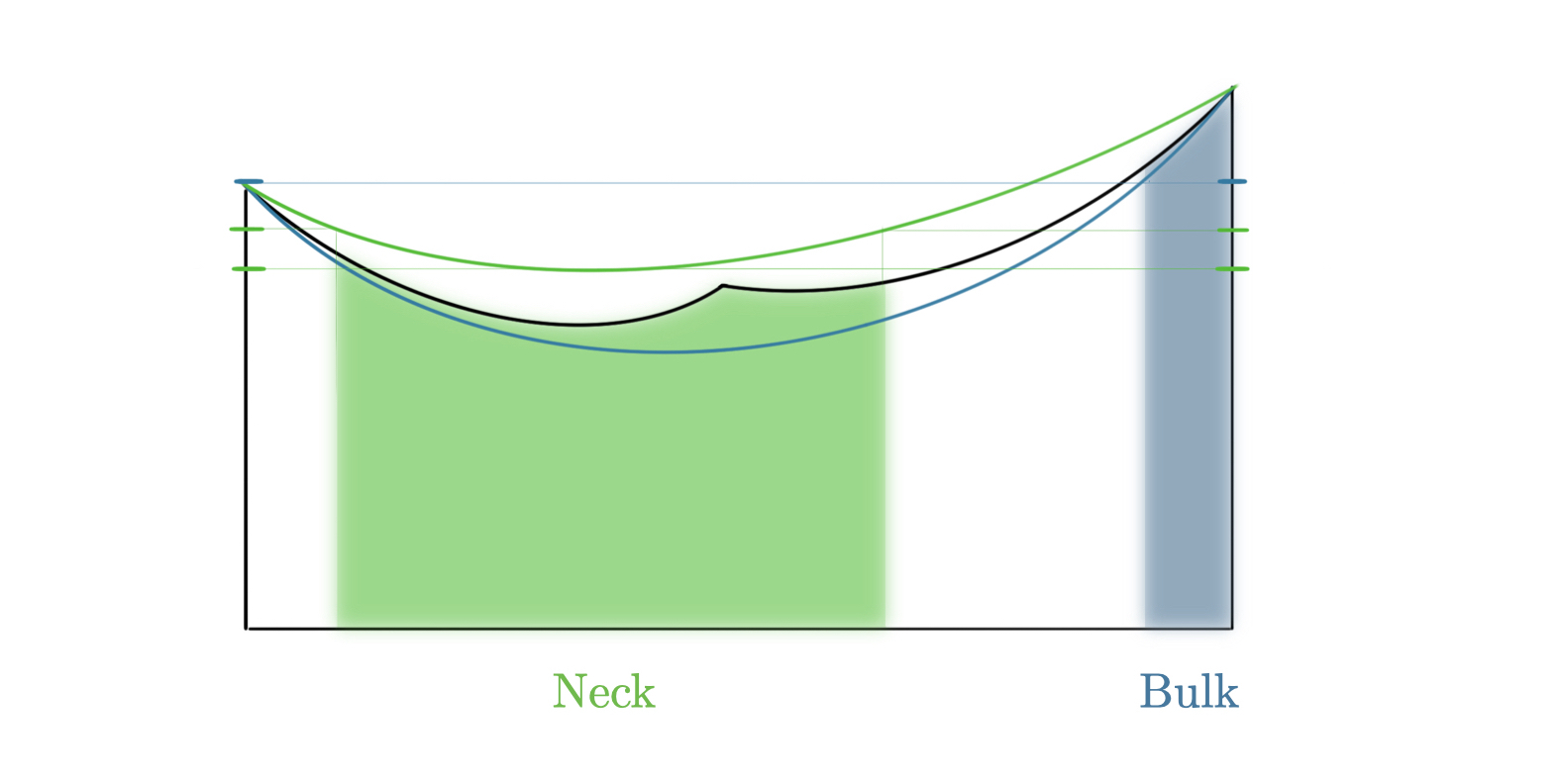}};
\node at (-96.5,.75) {$\mathcal{B}$};
\node at (-95.75,0.95) {$\mathcal{B} + \frac{\alpha-\mathcal{B}}{4}$};
\node at (-96.5,1.32) {$\frac{\alpha+\mathcal{B}}{2}$};
\node at (-95.75,1.75) {$\alpha$};
\end{tikzpicture}
    \caption{The neck and bulk of the domain\protect \footnotemark}
    \label{fig:neckandbulk}
\end{figure}

\footnotetext{In this figure, we have blurred the boundaries of the neck and bulk to emphasize that these are only defined up to a term of order $O(r)$. At first, it might seem counter-intuitive that the neck is much longer than the bulk, but as the height becomes very small (i.e., $r$ becomes small), the dominant term in determining the eigenvalue of domain is the height in the bulk.}

We then define the \emph{bulk} of the domain is the set where the convex lower barrier is larger than $\frac{1+\mathcal{B}}{2}$ or $\frac{\alpha+\mathcal{B}}{2}$, whichever is larger. Again using Lemma \ref{Lipschitzness of the envelopes}, we have the following observation.

\begin{proposition}
For all $\alpha \in \left(\frac{10}{11},\frac{11}{10} \right)$, the measure of the $x$-values in the bulk has a uniform lower bound $b$, which depends on $L$ and $K$ but is \emph{independent} of $r$.
\end{proposition}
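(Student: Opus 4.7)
The plan is to exploit the fact that the condition defining the bulk is expressed entirely in terms of the barriers $j_*$ and $j^*$ from Proposition \ref{prop:barriers-for-JF}: both are solutions to linear second-order ODEs whose coefficients depend only on $\kappa_2(0)$, subject to boundary values $\alpha$ at $x=-L$ and $1$ at $x=L$ inherited (after rescaling by $r$) from the $y$-coordinates of the vertices of $\Omega_{\alpha,r}$. Neither the ODEs nor their boundary data involve $r$, so the neck bound $\mathcal{B}(\alpha) := \min_{x\in[-L,L]} j^*(x)$, the threshold $T(\alpha) := \tfrac{\max(\alpha,1)+\mathcal{B}(\alpha)}{2}$, and the set
\begin{equation*}
U(\alpha) := \{\, x \in [-L, L] : j_*(x) > T(\alpha) \,\}
\end{equation*}
are all manifestly $r$-independent. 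The proposition thus reduces to showing that $\inf_{\alpha \in [10/11,\,11/10]} |U(\alpha)| > 0$.

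The first step would be to verify pointwise positivity. Since $\ddot{j^*} = \tfrac{|\kappa_2(0)|}{2} j^* > 0$, the upper barrier is strictly convex and non-constant, so its minimum $\mathcal{B}(\alpha)$ satisfies $\mathcal{B}(\alpha) < \max(\alpha,1)$ strictly (including when $\alpha=1$). Hence $T(\alpha) < \max(\alpha, 1) = j_*(x_\star)$ at whichever endpoint $x_\star \in \{-L, L\}$ realizes $\max(\alpha, 1)$, and continuity of $j_*$ provides a half-open interval adjacent to $x_\star$ contained in $U(\alpha)$.

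Next I would promote this to a uniform lower bound. Standard continuous dependence of linear ODE solutions on boundary data gives continuity of $\alpha \mapsto j_*(\cdot,\alpha)$ and $\alpha \mapsto j^*(\cdot,\alpha)$ in the $C^1([-L,L])$ topology, whence $\mathcal{B}(\alpha)$ and $T(\alpha)$ inherit continuity. Because $j_*$ is also strictly convex, the sublevel set $\{j_* \leq T(\alpha)\}$ is a closed interval whose endpoints depend continuously on $\alpha$ by the implicit function theorem; the required transversality holds because a crossing $j_*(x_0) = T(\alpha)$ with $\dot{j_*}(x_0) = 0$ would imply $j_*(x_0) = \min j_* \leq \mathcal{B}(\alpha) < T(\alpha)$, a contradiction (using $\min j_* \leq j_*(x^*_{\min}) \leq j^*(x^*_{\min}) = \mathcal{B}$ where $x^*_{\min}$ is the minimizer of $j^*$). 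Therefore $|U(\alpha)|$ is continuous on the compact interval $[10/11, 11/10]$, and being pointwise positive it attains a positive minimum
\begin{equation*}
b \;:=\; \min_{\alpha \in [10/11,\,11/10]} |U(\alpha)| \;>\; 0,
\end{equation*}
depending only on $\kappa_2(0)$, $K$, and $L$.

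The one mild technical point to watch is the transition where $\{j_* \leq T(\alpha)\}$ changes from touching one endpoint of $[-L,L]$ to touching neither, which occurs at the unique $\alpha$ (if any in the interval) for which $j_*(x_\star) = T(\alpha)$ at one endpoint; continuity of $|U(\alpha)|$ across such a transition is preserved because the appearing or disappearing component of the complement has length zero at the transition point. Finally, the $O(r)$ gap between the barriers and the actual height function $\texttt{H}(x,z)$ noted in the paragraph defining the neck is absorbed into $b$ for all sufficiently small $r$.
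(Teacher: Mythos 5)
Your argument is correct, and it establishes the proposition; but the route is genuinely different from the paper's (very terse) reasoning. The paper invokes Lemma~\ref{Lipschitzness of the envelopes}: the lower barrier $j_*$ is uniformly Lipschitz in $x$ with constant $M$ depending only on $K$ and the endpoint heights, and since $j_*$ takes the value $\max(\alpha,1)$ at one endpoint while the bulk threshold is $T(\alpha)=\tfrac{\max(\alpha,1)+\mathcal{B}}{2}$, the set $\{j_* > T(\alpha)\}$ contains an interval of length at least $\tfrac{\max(\alpha,1)-\mathcal{B}(\alpha)}{2M}$ adjacent to that endpoint; one then needs only a uniform-in-$\alpha$ lower bound on $\max(\alpha,1)-\mathcal{B}(\alpha)$, which the paper leaves implicit. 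You instead bypass the Lipschitz estimate entirely and run a soft argument: reduce to $r$-independent barrier data, prove pointwise positivity of $|U(\alpha)|$ from strict convexity of $j^*$ (so $\mathcal{B}(\alpha)<\max(\alpha,1)$), establish continuity of $\alpha\mapsto |U(\alpha)|$ via continuous dependence of ODE solutions and a transversality check at the level sets, and conclude by compactness of $[10/11,11/10]$. What the paper's approach buys is an explicit, quantitative $b$ in terms of $M$ and the gap $\max(\alpha,1)-\mathcal{B}(\alpha)$; what yours buys is that you never need the Lipschitz lemma as a black box and your positivity step makes explicit the fact (left tacit in the paper) that the gap between $\max(\alpha,1)$ and $\mathcal{B}(\alpha)$ is uniformly positive. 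Both your auxiliary claims check out: $j_*\le j^*$ follows from a maximum-principle comparison since $j_*$ is ``more convex,'' and the transversality contradiction correctly uses $\min j_*\le\mathcal{B}(\alpha)<T(\alpha)$. Two small remarks: the statement concerns the open interval $(10/11,11/10)$ while you minimize over the closure---this is harmless since the barrier ODEs and boundary data extend continuously to the endpoints---and your closing remark about absorbing the $O(r)$ discrepancy between the barriers and the actual height $\texttt{H}$ is exactly the right thing to say, since the paper's definitions of neck and bulk are stated in terms of the barriers but the eigenvalue estimates ultimately use the geometry of $\Omega_{\alpha,r}$ itself.
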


Since the lower barrier is convex, the bulk has at most two connected components. Therefore, the bulk of the domain contains a rectangle whose dimensions are at least \[\frac{b}{2} \times \frac{ \max_{}\{1,\alpha\}+\mathcal{B}}{2} r \times \rho r \]
where $b$ is some constant which is smaller than $L$ but has a uniform lower bound as $r$ goes to $0$.
So for $r$ small enough, the first eigenvalue of the $\Omega_{\alpha,r}$ is determined by the height in the bulk, which means that it satisfies
\begin{equation*}
    \lambda_1(\Omega_{\alpha,r}) \leq \frac{4 \pi^2}{b^2} +  \frac{4 \pi^2}{\left(\max_{}\{1,\alpha\}+\mathcal{B}\right)^2 r^2} + \frac{\pi^2}{\rho^2 r^2} + O\left(\frac{1}{r}\right).
\end{equation*}

Note that the ratio of the maximum height of the neck (i.e., $\mathcal{B} +\frac{\max_{} \{\alpha,1\}-\mathcal{B}}{4})$ over the minimum height of the bulk (i.e., $\frac{\max_{}\{\alpha,1\}+\mathcal{B}}{2}$) is strictly less than 1 and this bound is uniform in $\alpha$ for $\alpha \in \left(\frac{10}{11},\frac{11}{10} \right)$. 

As before, we define $V(x)$ to be the $L^2$ integral
\begin{equation*}
    V(x_0) = \int_{\Omega \cap \{x = x_0\}} g^{xx} h^2 \, dy \,dz
\end{equation*}
and compute $\frac{\partial^2}{\partial x^2} V(x)$. At this point, we might worry that because the $x$-cross-slices need not be smooth sets, that the second derivative of $V$ may not exist. However, since $h^2$ vanishes to second order on the boundary, the problem terms coming from how the shape of the boundary changes will vanish when we calculate the second derivative of $V$.

 Using our bounds on the eigenvalues, within the neck we will obtain a bound on $\frac{\partial^2}{\partial x^2} V(x)$ which is very large (at least $\frac{C}{r^2} V$). By integrating this differential inequality, we are able to repeat the doubling estimate.

\begin{obs}
The principal eigenfunction must be exponentially small in the neck.
\end{obs}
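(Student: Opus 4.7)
The plan is to reproduce the two-step argument of Section \ref{Eigenfunction analysis 2D}, with the flattening proposition (Proposition \ref{prop:barriers-for-JF}) and the choice of $\rho$ satisfying \eqref{Assumption on rho} playing the role of the direct Rauch comparison used in two dimensions. The key input is that within the neck, the principal Dirichlet eigenvalue of each cross-slice $\Omega_{\alpha,r}\cap\{x=x_0\}$ exceeds the upper bound on $\lambda_1(\Omega_{\alpha,r})$ coming from the bulk by a definite amount of order $1/r^2$.

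I would begin by differentiating $V(x_0)$ twice in $x_0$. Although the cross-slices need not be smooth sets, the boundary shape-derivative terms vanish since $h$ and its transversal first derivative on $\partial\Omega_{\alpha,r}$ are compatible with $h$ vanishing on the boundary. Expanding $\partial^2_{xx}(h^2)=2(\partial_x h)^2+2h\partial^2_{xx} h$, writing $\partial^2_{xx} h = \Delta h - g^{yy}\partial^2_{yy}h - g^{zz}\partial^2_{zz}h + (\text{mixed and Christoffel terms})$, and invoking $\Delta h = -\lambda_1 h$ yields, exactly as in Lemma \ref{lem:second-derivative-V}, an expression of the form
\[
\tfrac{1}{2}\partial^2_{xx}V(x_0) \geq \bigl[\mu(x_0) - \lambda_1(\Omega_{\alpha,r})\bigr]V(x_0) + (\text{positive gradient terms}) + (\text{error terms}),
\]
where $\mu(x_0)$ denotes the Dirichlet eigenvalue of the transversal cross-slice and the error terms are controlled using Proposition \ref{prop:estimates-metric}. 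By construction, within the neck the cross-slice fits inside a rectangular region of $y$-width at most $(\mathcal{B}+\tfrac{\max\{1,\alpha\}-\mathcal{B}}{4})r$ and $z$-width at most $2\rho r$, while the upper bound on $\lambda_1(\Omega_{\alpha,r})$ is controlled by the bulk where the $y$-height is at least $\tfrac{\max\{1,\alpha\}+\mathcal{B}}{2}r$. The concavity of \eqref{Assumption on rho} is exactly what guarantees that $\mu(x_0)-\lambda_1(\Omega_{\alpha,r})\geq C/r^2$ uniformly on the neck, so that after absorbing the error terms into a portion of the transversal gradient as in \eqref{eq:estimate-gradienty} one obtains $\partial^2_{xx}V(x)\geq (C/r^2)V(x)$ throughout the neck.

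Integrating this differential inequality as in the proof of Lemma \ref{lem:h1-small-in-neck} (by comparison with $V(x_0)\cosh(\sqrt{C}(x-x_0)/r)$) and combining the result with the gradient estimate \eqref{gradient bounds}, which holds in all dimensions, yields exponential smallness for $V$ itself across the neck. Pairing the crude volumetric upper bound $V(x)\leq Cr^2$ with the local lower bound $V(x_0)\geq C(\sup_{\mathrm{slice}} h)^4 r^2$, where the fourth power replaces the third power of \eqref{eq:V-lowerbound} because of the extra depth dimension, forces $\sup_{\Omega_{\alpha,r}\cap\{x=x_0\}} h \leq C\exp(-c/r)$ for every $x_0$ in the interior of the neck. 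The main technical obstacle is securing the quantitative gap $\mu(x_0)-\lambda_1 \geq C/r^2$ on the neck when $\kappa_3(0)$ is positive and the $z$-direction of Jacobi fields contracts rather than expands; this is precisely why $\rho$ was chosen large enough to make the function in \eqref{Assumption on rho} concave throughout $[-L_0,L_0]$, and why the neck/bulk thresholds were arranged so that $\mathcal{B}+\tfrac{\max\{1,\alpha\}-\mathcal{B}}{4}<\tfrac{\max\{1,\alpha\}+\mathcal{B}}{2}$.
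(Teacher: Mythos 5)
Your proposal is correct and takes essentially the same approach as the paper: the paper's own justification of this observation is a short sketch that says to compute $\partial^2_{xx}V$, note the boundary-shape terms vanish because $h^2$ vanishes to second order on $\partial\Omega$, and use the eigenvalue bounds to get $\partial^2_{xx}V\geq (C/r^2)V$ in the neck, then integrate and repeat the doubling estimate. You have filled in the details of that sketch faithfully, including the correct adjustment of the pointwise lower bound on $V$ from $a^3 r$ to $a^4 r^2$ when passing from two to three dimensions, and you correctly identify that the $C/r^2$ gap between the cross-slice eigenvalue and $\lambda_1(\Omega_{\alpha,r})$ comes from the uniform ratio bound between the neck height and the bulk height (with the concavity condition \eqref{Assumption on rho} having played its role earlier, in fixing $\rho$ so that the depth direction cannot spoil this ratio when $\kappa_3>0$).
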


When $\alpha = 1$, the bulk exists on both sides of the cross-slice $\{x=0\}$. On the other hand, for $\alpha$ sufficiently large so that the upper barrier is increasing, the bulk lies on the right side of the domain and includes $x=L$. Conversely, if the upper barrier is decreasing, the bulk lies on the left side and includes $x=-L$. From this, we find the following.

\begin{obs}
As $\alpha$ increases, the neck moves from the left of the domain to the right.
\end{obs}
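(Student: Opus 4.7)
The plan is to track the argmin of the upper barrier $j^{*}(x)$ from Proposition \ref{prop:barriers-for-JF} (at $z=0$) as a function of $\alpha$. Since $j^{*}$ solves $\ddot{j^{*}} = \omega^{2} j^{*}$ with $\omega := \sqrt{-\kappa_{2}(0)/2} > 0$ and boundary data $j^{*}(-L) = \alpha r$, $j^{*}(L) = r$, I would write $j^{*}(x) = A(\alpha)e^{\omega x} + B(\alpha)e^{-\omega x}$ and solve the resulting $2\times 2$ linear system to obtain
\[ A(\alpha) = \frac{r(e^{\omega L} - \alpha e^{-\omega L})}{2\sinh(2\omega L)}, \qquad B(\alpha) = \frac{r(\alpha e^{\omega L} - e^{-\omega L})}{2\sinh(2\omega L)}. \]
Being strictly convex on $[-L,L]$, $j^{*}$ attains its minimum at the unique critical point $x_{\min}(\alpha) = \tfrac{1}{2\omega}\ln(B(\alpha)/A(\alpha))$ whenever this point lies in $(-L, L)$, and at an endpoint $\pm L$ otherwise.

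Next, monotonicity would follow from the one-line computation
\[ \frac{d}{d\alpha}\!\left(\frac{B(\alpha)}{A(\alpha)}\right) = \frac{2\sinh(2\omega L)}{(e^{\omega L} - \alpha e^{-\omega L})^{2}} > 0, \]
so $x_{\min}(\alpha)$ is strictly increasing on the interior regime, with $x_{\min}(1) = 0$ by symmetry. Solving $x_{\min}(\alpha) = \pm L$ produces the thresholds $\alpha_{\pm} = \cosh(2\omega L)^{\pm 1}$ at which the critical point first exits $[-L, L]$. The length restriction \eqref{Upper convex envelope length restriction} gives $\cosh(2\omega L) = 1 + 2\sinh^{2}(\omega L) < 11/10$, and hence also $\alpha_{-} = 1/\cosh(2\omega L) > 10/11$. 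Consequently, as $\alpha$ traverses $(10/11, 11/10)$, the argmin starts pinned at $x = -L$ (for $\alpha \in (10/11, \alpha_{-})$), sweeps monotonically through the interior (for $\alpha \in (\alpha_{-}, \alpha_{+})$), and finally pins to $x = L$ (for $\alpha \in (\alpha_{+}, 11/10)$).

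Finally, I would promote this $z = 0$ picture to the actual neck of $\Omega_{\alpha, r}$ via Lemma \ref{Lipschitzness of the envelopes}: for $|z| \leq \rho r$ the vectorial boundary data $V_{p}(z), V_{q}(z)$ perturb by $O(r)$, the upper barrier $j^{*}(\,\cdot\,; z)$ changes by $O(r)$, and uniform convexity makes its argmin drift by at most $O(r)$. Since $x_{\min}'(1) = \coth(\omega L)/(2\omega)$ is a fixed positive constant independent of $r$, this perturbation is negligible once $r$ is small and the monotone sweep carries through uniformly in $z$. The only delicate point is the calibration of \eqref{Upper convex envelope length restriction}: it is chosen precisely so that the thresholds $\alpha_{\pm}$ sit strictly inside the parameter window $(10/11, 11/10)$, thereby converting the abstract monotonicity of $x_{\min}(\alpha)$ into an actual left-to-right traversal of the neck across the full domain, which is exactly what is needed to close the ensuing continuity argument.
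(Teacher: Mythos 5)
Your proof is correct and takes essentially the same route as the paper: the paper says ``by solving the relevant boundary value problem, Condition \eqref{Upper convex envelope length restriction} implies that the upper barrier is monotonic in $x$ as soon as'' $\alpha$ is far enough from $1$, and you carry out that boundary-value computation explicitly, obtaining $x_{\min}(\alpha)=\tfrac{1}{2\omega}\ln(B/A)$, its strict monotonicity in $\alpha$, and the thresholds $\alpha_\pm=\cosh(2\omega L)^{\pm1}$, then checking that \eqref{Upper convex envelope length restriction} places both inside $(10/11,11/10)$. One small remark: the paper defines the neck directly from the upper barrier $j^*$, and since $j^*$ is a function of $x$ alone with $z$-independent boundary data $\alpha r$ and $r$, its argmin is already $z$-independent; the $O(r)$ drift arises only when one compares the argmin of the actual height function $\texttt{H}(\cdot,z)$ (wedged between $j_*$ and $j^*$) with the argmin of $j^*$, rather than from perturbing the barrier's boundary data as you frame it — but this does not affect the conclusion.
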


As before, let $\psi$ be a smooth function that transitions rapidly from $+1$ to $-1$ in the neck region. All that is left to do is show that the integral
 \begin{equation} \label{Orthogonality test}
     F(\alpha,r) = \int_{\Omega_{\alpha,r}}  (\psi h ) \cdot h
 \end{equation}
switches signs when $\alpha$ is close enough to $1$ so that the domain is not too tall or short on either side.

To see this, observe that if the upper barrier is increasing, the neck is on the left side of the domain and so integral \eqref{Orthogonality test} is negative. On the other hand, if the upper barrier is decreasing, the integral is positive. However, by solving the relevant boundary value problem, Condition \eqref{Upper convex envelope length restriction} implies that the  upper barrier is monotonic in $x$ as soon as $|\alpha-1|>1/10$. This value is small enough so that all of the preceding estimates on the barriers and the doubling estimates still hold, which completes the proof.

\section{Acknowledgements} 

The authors would like to thank Guofang Wei and Malik Tuerkoen for their helpful comments. The first named author is partially supported by Simons Collaboration Grant 849022 (``K\"ahler-Ricci flow and optimal transport") and the second named author is partially supported by Simons Collaboration Grant 579756.

\appendix
\section{Estimates on the metric}

\label{ap:estimates-metric}
%
%

The Greek indices range from $2$ to $n$. 
The Roman indices range from $1$ to $n$.

\begin{lemma}
\label{lem:metric-estimate}
Suppose that in a tubular neighborhood of $\gamma$, we have that the sectional curvature is bounded between two constants $K_1 \leq \kappa \leq K_2$. Then there is a potentially smaller neighborhood of $\gamma$ for which the following estimate on the Riemannian metric expressed in Fermi normal coordinates holds:
\begin{gather}
\label{eq:metric-space}
g_{\alpha\beta}(x,y)= \delta_{\alpha\beta}+\frac{1}{3} R_{\eta\alpha\beta\nu}(x,0) y^\eta y^\nu + O(|y|^3) \\
\label{eq:metric-time}
g_{11}(x,y)= 1 +  R_{\eta 11 \nu} (x,0) y^\eta y^\nu + O(|y|^3) \\
\label{eq:metric-mixed}
g_{1\alpha}(x,y) = \frac{2}{3} R_{\eta 1 \alpha \nu}(x,0) y^{\eta} y^{\nu} + O(|y|^3)
\end{gather}
where $O(|y|^3)$ is a function $f$ for which $\lim_{|y| \to 0} \frac{|f(x, y)|}{|y|^3} = C$ (and $f$ has bounded derivatives in $y$'s and $x$ because everything is smooth).
\end{lemma}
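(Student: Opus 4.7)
The plan is to identify the coordinate vector fields $\partial_1$ and $\partial_\alpha$ at $\Phi(x,y)$ with values of Jacobi fields along the radial geodesic $\gamma_y(s) := \Phi(x, sy) = \exp_{\gamma(x)}(s y^\alpha e_\alpha(x))$ from $\gamma(x)$ at $s=0$ to $\Phi(x,y)$ at $s=1$. Varying $y^\alpha$ in $\Phi$ gives a one-parameter family of geodesics whose variation field along $\gamma_y$ is the Jacobi field $J_\alpha$ with $J_\alpha(0)=0$ and $\nabla_s J_\alpha(0) = e_\alpha(x)$, identifying $J_\alpha(1) = \partial_\alpha|_{\Phi(x,y)}$. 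Varying $x$ with $y$ fixed gives the Jacobi field $J_1$ along $\gamma_y$ with $J_1(0) = e_1(x) = \gamma'(x)$ and $\nabla_s J_1(0) = 0$; the second condition uses that $\{e_\alpha\}$ is parallel along $\gamma$, so the initial velocity $y^\alpha e_\alpha(x)$ of each geodesic in the family has vanishing $\nabla_x$-derivative, and then $J_1(1) = \partial_1|_{\Phi(x,y)}$. This reduces the entire problem to solving a Jacobi ODE perturbatively in $|y|$.

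To carry out the perturbation, I would pull each $J_i(s)$ back to $T_{\gamma(x)}M$ by parallel transport $P^y_s$ along $\gamma_y$, setting $\hat J_i(s) := (P^y_s)^{-1} J_i(s)$. The Jacobi equation becomes an ODE on the fixed vector space $T_{\gamma(x)}M$,
\[
\partial_s^2 \hat J_i + \hat R_s(\hat J_i, y^\mu e_\mu)\, y^\nu e_\nu = 0,
\]
where $\hat R_s$ is the curvature at $\gamma_y(s)$ pulled back to $\gamma(x)$ and satisfies $\hat R_s = R|_{\gamma(x)} + O(s |y|\, \|\nabla R\|)$. The right-hand side is $O(|y|^2)$, so two Picard iterations and evaluation at $s=1$ give
\begin{align*}
\hat J_\alpha(1) &= e_\alpha - \tfrac{1}{6}\, y^\mu y^\nu\, R(e_\alpha, e_\mu) e_\nu + O(|y|^3), \\
\hat J_1(1) &= e_1 - \tfrac{1}{2}\, y^\mu y^\nu\, R(e_1, e_\mu) e_\nu + O(|y|^3),
\end{align*}
with $R$ evaluated at $\gamma(x)$. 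Smoothness of $R$, $\nabla R$, and $\nabla^2 R$ guarantees these remainders and their $x$- and $y$-derivatives are uniformly bounded on a suitably small tubular neighborhood of $\gamma$, which is precisely what is needed downstream to deduce \eqref{eq:metric-estimates}, \eqref{eq:gamma-estimates}, and \eqref{eq:pd2metric-estimates} in Proposition~\ref{prop:estimates-metric}.

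Since parallel transport is an isometry, $g_{ij}(x,y) = \langle \hat J_i(1), \hat J_j(1)\rangle_{\gamma(x)}$, and expanding these three inner products to quadratic order in $y$ immediately produces the combined coefficients $\tfrac16 + \tfrac16 = \tfrac13$ for $g_{\alpha\beta}$, $\tfrac12 + \tfrac12 = 1$ for $g_{11}$, and $\tfrac12 + \tfrac16 = \tfrac23$ for the mixed component $g_{1\alpha}$. A final cleanup step invokes the antisymmetries $R_{ijkl} = -R_{jikl} = -R_{ijlk}$, the pair symmetry $R_{ijkl} = R_{klij}$, and the first Bianchi identity to rewrite the raw expressions in the stated canonical index patterns $R_{\eta\alpha\beta\nu}$, $R_{\eta 1 1 \nu}$, and $R_{\eta 1\alpha\nu}$. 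The main obstacle is this last step in the mixed case: the two asymmetric summands, of shape $\langle R(e_1, e_\mu) e_\nu, e_\alpha\rangle$ and $\langle e_1, R(e_\alpha, e_\mu) e_\nu\rangle$, have different structural positions of the index $1$ and must be aligned via Bianchi before the coefficients $\tfrac12$ and $\tfrac16$ can be collapsed into a single $\tfrac23$ factor multiplying $R_{\eta 1 \alpha \nu}$. The other two cases are symmetric in $i,j$ and fall out more directly. A secondary but routine concern is verifying that the $\hat R_s - R|_{\gamma(x)}$ corrections and higher Picard iterates genuinely sit at order $|y|^3$ with derivatives controlled by $\nabla R$ and $\nabla^2 R$, which is what forces the further shrinking of the neighborhood mentioned in the statement.
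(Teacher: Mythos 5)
Your proposal is correct in substance and lands on the same underlying Jacobi-field framework as the paper, but you execute it differently: the paper Taylor-expands the inner products $\langle E_1,E_1\rangle$, $\langle E_1,Y_\alpha\rangle$, $\langle Y_\alpha,Y_\beta\rangle$ directly in $s$ (citing Sternberg for the pure-spatial block and computing a third derivative by hand for the mixed term), whereas you pull the Jacobi fields back to $T_{\gamma(x)}M$ via parallel transport and run Picard iteration on the resulting ODE. The tradeoff is that your route treats all three index blocks uniformly and makes the $\|\nabla R\|$ dependence of the remainder visibly explicit, while the paper's third-derivative computation already produces the single term $-4\langle R(w,e_1)w,e_\alpha\rangle$, so no recombination is required. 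Two comments worth heeding before you carry out the "final cleanup." First, you write the Jacobi equation as $\ddot J + R(J,\dot\gamma)\dot\gamma = 0$, while the paper uses $E_1'' = -R(\tilde\sigma',E_1)\tilde\sigma'$, i.e., $\ddot J + R(\dot\gamma,J)\dot\gamma = 0$; these differ by a sign unless you correspondingly flip the sign of $R_{ijkl}$, so working the calculation through naively with the $\hat J_i(1)$ expansions as you wrote them will produce formulas with the opposite overall sign in front of the curvature terms compared to \eqref{eq:metric-space}--\eqref{eq:metric-mixed}. Second, the "main obstacle" you identify — needing the first Bianchi identity to align the two summands in the $g_{1\alpha}$ case — is illusory: the pair symmetry $R_{ijkl}=R_{klij}$, the two antisymmetries, and the freedom to relabel the dummy indices $\mu,\nu$ under the symmetric contraction $y^\mu y^\nu$ already give $R_{\alpha\mu\nu 1}\,y^\mu y^\nu = R_{1\mu\nu\alpha}\,y^\mu y^\nu$, so the two contributions are proportional and $\tfrac12 + \tfrac16 = \tfrac23$ falls out directly, exactly as the paper's $-4\langle R(w,e_1)w,e_\alpha\rangle = -3\langle R(w,e_1)w,e_\alpha\rangle - \langle R(w,e_\alpha)w,e_1\rangle$ step uses only pair symmetry.
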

From the Taylor expansion \eqref{eq:metric-space}, \eqref{eq:metric-time}, and \eqref{eq:metric-mixed}, we immediately get \eqref{eq:metric-estimates}.


\begin{proof}[Proof of Lemma \ref{lem:metric-estimate} ]
Recall the construction of our Fermi coordinates. We start with a point $Z$, and orthonormal frame $\{e_i\}_{i=1}^n$ at $Z$ and a geodesic $\gamma(x)$, $x\in(-\epsilon, \epsilon)$ with $\gamma(0)=p$.  

We extend our frame $\{e_i\}_{i=1}^n$ at $p$ to points $\gamma(x)$ by parallel transport.  In a tubular neighborhood of $\gamma(x)$, the chart $\phi: (-\epsilon, \epsilon) \times B_{R}^{n-1}$ for some small $R$ given by  
  \[
  \phi(x,y)= \exp_{\gamma(x)} (  y^\alpha e_\alpha), \quad \alpha=2, \ldots, n
  \]
defines Fermi coordinates. 

Let $E_{\alpha}(x,y) := \partial_{\alpha} |_{(x,y)}$ be the coordinate vector fields at $(x,y)$ and let $E_1 (x,y)$ be the Jacobi vector field along the geodesic $\tilde \sigma (s) :=\phi(x,sy)$ with initial conditions $E_1(x,0)=\gamma'(x)$ and $\partial_s E_1(x,0) = 0$. 

For $x$ fixed, $y \mapsto \phi(x,y)$ are normal coordinates on the $(n-1)$-dimensional manifold formed by geodesics rays perpendicular to $\gamma$ at $\gamma(x)$. Classical derivations give that the Taylor expansion of the metric in this coordinate system $\alpha=2,\ldots,n$ is \eqref{eq:metric-space} (see Sternberg pp. 225-227 \cite{sternberg1999lectures}). 
Because of the parallel transport of our frame $\{e_{i}\}$ along $\gamma$, we have $\Gamma_{ij}^k (x, 0) = 0$ for $i,j,k = 1, \ldots, n$. This is equivalent to $\nabla_k g_{ij}(x,0)=0$. Because the Christoffel symbols vanish for all $(x, 0)$, $\nabla_1\Gamma_{ij}^k (x,0) =0$, where the subscript $,1$ indicates (covariant) differentiation in the $x$ direction. 

For \eqref{eq:metric-time} and \eqref{eq:metric-mixed}, notice that $g_{1j}(x,0) =\delta_{1j}$ from our choice of coordinates. We now compute the second derivatives of $g_{11}$ and $g_{1\alpha}$ at $(x,0)$.  Let us keep $x$ fixed and for $w=w^{\alpha}e_{\alpha}$ a unit vector consider the geodesic $\tilde \sigma(s)$ given by $(x, sw)$ in coordinates.  Let $\langle \cdot,\cdot \rangle$ be the inner product of the metric of $M$ and let $'$ denote the derivative with respect to $s$ (the derivative $\frac{D}{ds}$ or $\nabla_{\tilde \sigma'}$), we have
  \begin{align}
\label{eq:g11} g_{11} &= \langle E_1, E_1 \rangle\\
 \label{eq:g11-prime} g_{11}' &= 2 \langle  E_1', E_1 \rangle \\
  \notag g_{11}'' &= 2 \langle E_1'', E_1 \rangle + 2 \langle E_1',  E_1'\rangle\\
    \label{eq:g11-second}&=- 2 \langle R(\tilde \sigma', E_1) \tilde \sigma', E_1 \rangle + 2 \langle E_1', E_1' \rangle
  \end{align} 
Setting $s=0$, the first derivative  and the second term of the second derivative vanish. We get the Taylor expansion
\[
g_{11}(\tilde \sigma(s)) = 1 - \langle R(w, e_1) w, e_1\rangle s^2 + O(s^3)
\]
Setting $y = sw$, we have \eqref{eq:metric-time}.

Let $Y_{\alpha}$ be the Jacobi field along $\tilde \sigma$ with initial conditions $Y_{\alpha}(0) = 0$ and $Y_{\alpha}'(0)=e_{\alpha}$. We do this because $E_{\alpha}$ is not a Jacobi field, however, we have that $ E_{\alpha} (\tilde \sigma(s)) = \partial_{\alpha} (\tilde \sigma) (s)) = s^{-1} Y_{\alpha}(s)$ (see Sternberg p.227). We have $s g_{1\alpha} = \langle E_1, Y_{\alpha}\rangle$ and we compute the derivatives the latter  
\begin{align*}
    \langle E_1, Y_{\alpha}\rangle'&= \langle E_1 ', Y_{\alpha} \rangle + \langle E_1, Y_{\alpha}'\rangle\\
    \langle E_1, Y_{\alpha}\rangle''& = \langle E_1'', Y_{\alpha} \rangle + 2 \langle E_1 ', Y_{\alpha}'\rangle + \langle E_1, Y_{\alpha}''\rangle\\
     & = - \langle R(\sigma', E_1) \sigma', Y_{\alpha} \rangle + 2\langle E_1 ' , Y_{\alpha}' \rangle - \langle E_1, R(\sigma', Y_{\alpha}) \sigma' \rangle \\
   \langle E_1, Y_{\alpha}\rangle'''  & = - \langle (R(\sigma', E_1) \sigma')', Y_{\alpha} \rangle - \langle R(\sigma', E_1) \sigma', Y'_{\alpha} \rangle+ 2\langle E_1 '' , Y_{\alpha}' \rangle  + 2\langle E_1 ' , Y_{\alpha}'' \rangle \\
   & \ \ \  -\langle E_1', R(\sigma', Y_{\alpha}) \sigma'\rangle -\langle E_1, (R(\sigma', Y_{\alpha}) \sigma')'\rangle 
\end{align*}
Setting $s=0$ and recalling that $E_1'(t,0)$, $Y_{\alpha}(t,0) =0$, we get that $\langle e_1, Y_{\alpha} \rangle$, its first and second derivatives all vanish. For the third derivative, the only part of $(R(\sigma', Y_{\alpha}) \sigma')'$ that is not zero at $(t,0)$ is $R(\sigma', Y_{\alpha}') \sigma'$ so we have 
\[
\langle E_1, Y_{\alpha}\rangle'''(t,0) = -  4 \langle R(w, e_1)w, e_{\alpha} \rangle.
\]
Thus 
\[
s g_{1,\alpha}(\tilde \sigma(s)) = - \frac{2}{3} \langle R(w, e_1)w, e_{\alpha}  \rangle s^3 + O(s^4)
\]
Taking $y=sw$ and dividing by $s$ give \eqref{eq:metric-mixed}. 
The derivation of \eqref{eq:metric-space} is a similar computation with $s^2 g_{\alpha\beta} = \langle Y_{\alpha}, Y_{\beta}\rangle$. As mentioned above, it is done in \cite{sternberg1999lectures}.
\end{proof}

\begin{proof}[Proof of Proposition \ref{prop:estimates-metric}]
First note that we have bounds on $|E_i|$ for all $i$'s thanks to Lemma \ref{lem:metric-estimate}. 

The bounds \eqref{eq:gamma-estimates} for $i=2,\ldots, n$ follow directly from the computations above. For $g_{11}$, equations \eqref{eq:g11-prime} and \eqref{eq:g11-second} at $s=0$ are equivalent to  
    \begin{align*}
    \nabla_V g_{11}(x,0) &=0\\
    \nabla^2_{VV} g_{11}(x,0) & = - 2 \langle R(V, e_1)V, e_1\rangle
    \end{align*}
for any vector $V = \sum_{\alpha=2}^n V^{\alpha} e_{\alpha}$ perpendicular to $E_1$. Setting $V = E_{\alpha} + E_{\beta}$ in the second derivative and using the linearity of the connection, we get that $\nabla^2_{VV} =  \nabla^2_{E_{\alpha}E_{\alpha}} + \nabla^2_{E_{\alpha}E_{\beta}} + \nabla^2_{E_{\beta}E_{\alpha}} + \nabla^2_{E_{\beta}E_{\beta}}$. Thus one can bound the second mixed derivatives $\nabla^2_{\alpha\beta} g_{11}$ at $(x,0)$ with terms involving only curvature. Since $\nabla_\beta g_{11}$ vanishes at $(x,0)$, the Taylor expansion of $\nabla_\beta g_{11}$ gives 
\[
|\nabla_\beta g_{11} (x, y)| \leq C |y| 
\]
where $C$ depends on curvature bounds. The computations are similar for $\nabla_{\alpha} g_{\beta\eta}$ and $\nabla_{\alpha} g_{1\beta}$. 

For the Taylor expansion of $\nabla_1 g_{11}$, we differentiate \eqref{eq:g11}, \eqref{eq:g11-prime}, and \eqref{eq:g11-second} with respect to $\nabla_1$. We already noted that $\nabla_1 g_{11}(x,0) =0$. Also remark that $\nabla_1 \nabla_{\alpha} g_{11} = \nabla_{\alpha} \nabla_1 g_11$. The Christoffel symbols vanish at $(x,0)$ so the covariant derivatives can be ordinary derivatives and we had that $g_{11}' (x,0) = 0$ therefore $\nabla_\alpha \nabla_1 g_{11} (x,0) =0$. It also means that $\nabla_1 \nabla_\alpha E_1 =0$ at $(x,0)$. For the derivative of \eqref{eq:g11-second}, changes in the order of differentiation involve curvature terms, which are well controlled. From this inspection and the subsequent Taylor expansion of $\nabla_1 g_{11}$ around $(x,0)$, we have $|\nabla_1 g_{11}(x,y)| \leq C |y|^2$ where $C$ depends on the bounds on $R$, $\nabla R$ and the dimension $n$. The estimates on $\nabla_1 g_{1\alpha}$ and $\nabla_1 g_{\alpha\beta}$ are done similarly. 

For the estimates on the second derivatives $\nabla^2_{11} g$, we repeat the process above with another derivative with respect to $\nabla_1$. Again, the fact that the Christoffel symbols are zero along $\gamma$ allows us to take ordinary derivatives and find that $\nabla^2_{11} g_{ij} (x,0) =0$ and $\nabla_{\alpha} \nabla^2_{11} g_{11}(x,0) = \nabla^2_{11} \nabla_{\alpha} g_{ij}(x,0) =0$.  The only terms not controlled previously involve $\nabla^2_{11} R$ therefore $|\nabla^2_{11} g_{ij}(x,y)| \leq C |y|^2$, with $C$ dependent of $R$, $\nabla R$, and $\nabla^2 R$. 
\end{proof}

\bibliography{references}
\bibliographystyle{alpha}

\end{document}